\newtheorem{theorem}{Theorem}[section]
\newtheorem{proposition}{Proposition}[section]
\theoremstyle{definition}
\newtheorem{lemma}[proposition]{Lemma}
\newtheorem{definition}[proposition]{Definition}
\newtheorem{remark}[proposition]{Remark}
\newtheorem{corollary}[proposition]{Corollary}
\newcommand{\1}{{1\!\!1}}
\def\to{\mathchoice
{\longrightarrow}
{\rightarrow}
{\rightarrow}
{\rightarrow}}
\def\hookrightarrow{\mathchoice
{\DOTSB\lhook\joinrel\relbar\joinrel\rightarrow}
{\DOTSB\lhook\joinrel\rightarrow}
{\DOTSB\lhook\joinrel\rightarrow}
{\DOTSB\lhook\joinrel\rightarrow}}
\newcommand{\Z}{\mathbb{Z}}
\newcommand{\C}{\mathbb{C}}
\newcommand{\N}{\mathbb{N}}
\newcommand{\Q}{\mathbb{Q}}
\newcommand{\om}{\omega}
\newcommand{\eps}{\varepsilon}
\renewcommand{\o}{\otimes}
\newcommand{\ohat}{\Hat{\otimes}}
\newcommand{\half}{{\textstyle\frac12}}
\DeclareMathOperator{\Hom}{Hom}
\DeclareMathOperator{\Ind}{Ind}
\DeclareMathOperator{\Res}{Res}
\newcommand{\p}{\partial}
\newcommand{\<}{\langle}
\renewcommand{\>}{\rangle}
\newcommand{\bull}{\bullet}
\renewcommand{\SS}{\mathbb{S}}
\DeclareMathOperator{\ch}{ch}
\DeclareMathOperator{\rk}{rk}
\DeclareMathOperator{\Exp}{Exp}
\DeclareMathOperator{\Log}{Log}
\renewcommand{\v}{{\EuScript V}}
\newcommand{\w}{{\EuScript W}}
\newcommand{\e}{{\EuScript E}}
\renewcommand{\b}{{\EuScript B}}
\renewcommand{\c}{{\EuScript C}}
\newcommand{\m}{{\EuScript M}}
\newcommand{\n}{{\EuScript N}}
\newcommand{\CO}{\mathcal{O}}
\renewcommand{\]}{{]\!]}}
\newcommand{\DD}{\mathbb{D}}
\DeclareMathOperator{\GL}{GL}
\DeclareMathOperator{\gl}{\mathfrak{gl}}
\newcommand{\Sp}{\operatorname{Sp}}
\newcommand{\fsp}{\operatorname{\mathfrak{sp}}}
\DeclareMathOperator{\Wedge}{\Lambda}
\DeclareMathOperator{\Diff}{Diff}
\DeclareMathOperator{\Schur}{\mathsf{S}}
\newcommand{\CM}{\mathcal{M}}
\newcommand{\CC}{\mathcal{C}}
\newcommand{\Int}{\mathbb{L}}
\newcommand{\HH}{\mathsf{H}}
\newcommand{\TT}{\mathsf{T}}
\renewcommand{\AA}{\mathsf{A}}
\DeclareMathOperator{\cch}{c}
\DeclareMathOperator{\IH}{IH}
\DeclareMathOperator{\Tor}{T}
\DeclareMathOperator{\Flag}{Flag}
\DeclareMathOperator{\VVert}{Vert}
\DeclareMathOperator{\Edge}{Edge}
\DeclareMathOperator{\Leg}{Leg}
\DeclareMathOperator{\lloop}{loop}
\newcommand{\tomega}{\tilde{\omega}}
\newcommand{\cc}{\mathcal{C}}
\newcommand{\dd}{\mathcal{D}}
\newcommand{\K}{\mathcal{K}}
\newcommand{\T}{\mathcal{T}}
\renewcommand{\aa}{\mathcal{A}}
\def\eqdef{\overset{\text{def}}{=}}
\def\ft{\mathfrak{t}}
\def\fu{\mathfrak{u}}
\newcommand{\Ga}{\Gamma}
\newcommand{\La}{\Wedge}
\newcommand{\tp}{\mathrm{top}}
\def\di{\diamond}
\def\longto{\longrightarrow}
\begin{document}

\title{Graph complexes and the symplectic character of the Torelli group}

\author{Stavros Garoufalidis}
\address{School of Mathematics \\
         Georgia Institute of Technology \\
         Atlanta, GA 30332-0160, USA \newline 
         {\tt \url{http://www.math.gatech.edu/~stavros}}}
\email{stavros@math.gatech.edu}

\author{E. Getzler}
\address{Department of Mathematics, \\ Northwestern University, \\
Evanston, IL 60208-2730.}
\email{getzler@math.nwu.edu}

\thanks{
{\em Key words and phrases: mapping class group, Torelli group, symplectic
representations, graph homology, Mumford conjecture.}
}


\begin{abstract}
The mapping class group of a closed surface of genus $g$ is an extension
of the Torelli group by the symplectic group. This leads to two natural
problems: (a) compute (stably) the symplectic decomposition of the lower 
central series of the Torelli group and (b) compute (stably) the Poincar\'e 
polynomial of the cohomology of the mapping group with coefficients in a 
symplectic representation $V$. Using ideas from graph cohomology, we give
an effective computation of the symplectic decomposition of the quadratic
dual of the lower central series of the Torelli group, and assuming the
later is Kozsul, it provides a solution to the first problem. This, together
with Mumford's conjecture, proven by Madsen-Weiss, provides a solution to
the second problem. Finally, we present samples of computations, up to degree 
13.
\end{abstract}

\maketitle

{\footnotesize
\tableofcontents
}

\section{Introduction}
\label{sec.intro}

The mapping class group $\Gamma_g$ is the group of isotopy classes 
of diffeomorphisms of a closed oriented surface $\Sigma_g$ of genus $g$.
A surface diffeomorphism acts on the first homology of the surface
$H_1(\Sigma_g,\Z) \simeq \Z^{2g}$ preserving the intersection form (a 
symplectic form defined over the integers). This induces a linearization map
$\Gamma_g\to \Sp(2g,\Z)$ which turns out to be onto with kernel the
Torelli group $\Tor_g$:
\begin{equation}
\label{eq.TM}
1 \longto \Tor_g \longto \Gamma_g \longto \Sp(2g,\Z) \longto 1 \,.
\end{equation}
In other words, $\Tor_g$ is the group of surface
diffeomorphisms, up to isotopy, that act trivially on the homology of 
the surface. The lower central series of any group $G$ is a graded Lie
algebra, and the action of $G$ on itself by conjugation becomes a trivial
action on its graded Lie algebra. Since $\Tor_g$ is a normal subgroup of
$\Gamma_g$, it follows that the action of $\Gamma_g$ by conjugation on
the lower central series $\ft_g$ of $\Tor_g$ descends to an action of 
$\Sp(2g,\Z)$. Said differently, the lower central series of $\Tor_g$ is 
a symplectic module, which in fact is finitely generated at each degree. 
What's more, using Mixed Hodge Structures, R. Hain showed in \cite{Hain} that 
(for $ g \geq 6$)
$\ft_g$ is a quadratic Lie algebra given by the following presentation
\begin{equation*}
\ft_g = \mathbb L(\<1^3\>)/(R)
\end{equation*}
where $\mathbb L$ is the free Lie algebra, and 
$R + \<2^2\> + \<0\>=\Wedge^2(\<1^3\>)$. In particular, it follows
that the stable algebra $\ft \eqdef \lim_{g\to\infty}\ft_g$ exists, and 
is given by the above presentation. The purpose of our paper is to 
\begin{itemize}
\item[(P1)] compute the symplectic decomposition of the lower central 
series of the Torelli group,
\item[(P2)] 
compute (stably) the Poincar\'e polynomial of the cohomology 
of the mapping group with coefficients in a symplectic representation $V$.
\end{itemize}
Our solution to (P1) uses ideas of graph cohomology to compute the
symplectic decomposition of the quadratic dual of $\ft$. Assuming that
$\ft$ is Koszul, a symplectic decomposition of $\ft$ follows. The latter,
along with Mumford's conjecture (proven by Madsen-Weiss~\cite{MW}), 
provide a solution to (P2).

Our paper combines the work of Hain, Harer, Kawazumi, Looijenga, Morita
Madsen and Weiss, and gives explicit symplectic decomposition formulas, 
and a sample of computations. Regarding computations, irreducible finite
dimensional symplectic representations are parametrized by partitions. 
The symplectic decomposition of $\ft$ in degree $n$ involves partitions with 
at most $3n$ parts, and for $n=13$ (where the answer is given in 
\cite{Ga:data}) the number of those is $177970$.  
The article was written in 1997 and a computation of the symplectic character
of $\ft$ up to degree 10 was done in Brandeis. The computation was repeated 
in 2006 in Georgia Tech and reached degree 13. Back in 1997 when the paper 
was written, it assumed Mumford's conjecture (now proven by 
Madsen-Weiss~\cite{MW}) and the Kozsulity conjecture for the graded Lie 
algebra of the Torelli group (still open), and our results predated the 
representation-stability ideas of Church-Farb~\cite{CF}. Further work on
the Torelli homomorphism and the Goldman-Turaev Lie bialgebra can be found
in Kawazumi's survey article~\cite{Goldman-Turaev}.

\subsection{The lower central series of the Torelli group}
\label{sub.hain}

Let $\Sigma_{g,s}^r$ be an oriented surface of genus $g$ with $r$ distinct 
ordered points and $s$ boundary components, and let
$\Gamma_{g,s}^r$ (resp. Let $\Tor_{g,s}^r$) be the mapping class group 
$\pi_0(\Diff_+(\Sigma_{g,s}^r))$ (resp., the subgroup of the mapping class
group that generated by surface diffeomorphisms that act trivially
on the homology of the surafce).
It is customary to omit the indices $s,r$ when they are zero. 
Recall that rational irreducible representations of the symplectic group
are indexed by partitions. 
Let $\ft_g$ denote the graded Lie algebra whose degree $n$
part equals (rationally) to the quotient of the $n$th commutator group
of the Torelli group $\Tor_g$ modulo the $(n+1)$th commutator group.
The exact sequence \eqref{eq.TM} 
implies that $\ft_g$ is a graded $R(\fsp_g)$-Lie algebra whose
universal enveloping algebra can be identified with the 
completion $\widehat{(\Q\Tor_g)}_I$ of the group ring $\Q[ \Tor_g ]$ 
with respect to its augmentation ideal $I$.
Using Mixed Hodge Structures, R. Hain showed in \cite{Hain} that 
(for $ g \geq 6$)
$\ft_g$ is a quadratic Lie algebra given by the following presentation
\begin{equation*}
\ft_g = \mathbb L(\<1^3\>)/(R)
\end{equation*}
where $\mathbb L$ is the free Lie algebra, and 
$R + \<2^2\> + \<0\>=\Wedge^2(\<1^3\>)$. In particular, it follows
that the stable algebra $\ft \eqdef \lim_{g\to\infty}\ft_g$ exists, and 
is given by the above presentation. 

\subsection{The homology of the mapping class group}
\label{sub.stability}

Harer \cite{Ha} has 
proved that the cohomology groups of the mapping class groups stabilize as
$g\to\infty$. Let $\gamma_g \in \Z[t]$ be the Poincar\'e polynomial of 
$\Gamma_g$, and let $\gamma_\infty=\lim_{g\to\infty}\gamma_g \in \Z\[t\]$. 
Mumford's conjecture, proven by Madsen-Weiss~\cite{MW},
amounts to the formula 
\begin{equation}
\label{eq.gammainf}
\gamma_\infty = \Exp \Bigl( \frac{1}{1-t^2} \Bigr) \,.
\end{equation}
Let $\HH$ be the symplectic vector space $H^1(\Sigma_g,\C)$. The symplectic
action of $\Gamma_g$ on $\HH$ induces a homomorphism from the representation
ring $R(\fsp_{2g})$ of the symplectic group to that 
$R(\Gamma_g)$ of the mapping class group. 
Using Harer's results on stability of the cohomology of the
mapping class groups, Looijenga \cite{Looijenga} has shown that the for
every representation $V \in R(\fsp_{2g})$, the cohomology groups 
$
H^\bull(\Gamma_g, V)
$
stabilize as $g\to\infty$, and will thus be denoted by
$H^\bull(\Gamma,V)$. Assuming Mumford's conjecture, Looijenga further
calculated (stably) the Poincar\'e polynomials for all symplectic 
representations $V$. 

On the other hand, using an extension due to Morita of the Johnson 
homomorphism, 
one can define classes in $H^\bull(\Gamma, \TT(\HH))$, where
$\TT(\HH)=\bigoplus_{n=0}^\infty \HH^{\o n}$. A priori, the number of these
classes may differ than the ones counted by Looijenga. It is a purpose
of the paper to compare the classes counted by Looijenga with the ones
constructed by Morita's extension and to show that they precisely agree,
assuming Mumford's conjecture. En route,
will also give a computation for the above mentioned Poincar\'e
polynomials using ideas from graph cohomology.

We now briefly recall some homomorphisms studied by Morita.
Generalizing results of Johnson \cite{Jo1,Jo2}, Morita \cite{Mo1,Mo2} defined
group homomorphisms $\rho, \rho^1$ (over $1/24 \Z$, although
we will only need their version over $\C$)
that are part of a commutative diagram
$$
\begin{CD}
\Ga_g^1 @>\rho^1>> N^1 \rtimes \Sp(\HH) \\
@VVV          @VVV  \\
\Ga_g @>\rho>> N \rtimes \Sp(\HH)
\end{CD}
$$
where $\HH=H_1(\Sigma_g,\Z)$ and $N^1, N$ are explicit torsion free
nilpotent groups, related by an exact sequence 
$1 \to \<1\>+\<1^2\> \to N^1 \to N \to 1$, and 
$\Ga_g^1 \to \Ga_g$ is induced by the
forgetful map $\Sigma_g^1 \to \Sigma_g$.
$\rho$ induces a map on cohomology $\rho^\ast:
H^\bull(N \rtimes \Sp(\HH), \TT(\HH)) \to
H^\bull(\Gamma,\TT(\HH))$.
Since $\Sp(\HH)$ is reductive, the
Lyndon-Hochshild-Serre spectral sequence \cite{HS}
(whose $E^2_{p,q}$-term equals to $H^p(\Sp(\HH), H^q(N,\TT(\HH)))$
and vanishes for $p > 0$) collapses, thus implying that
$H^\bull(N \rtimes \Sp(\HH), \TT(\HH))\cong H^\bull(N,\TT(\HH))^{\Sp}.$

Since $N$ is a nilpotent Lie group satisfying $[N,[N,N]]=0$, 
$N/[N,N]=\<1^3\>$, $[N,N]=\<2^2\>$, and the natural map $\La^2(N)\to[N,N]$
is onto, it follows that
its cohomology $H^\bull(N,\TT(\HH))$
can be identified with $\AA \otimes \TT(\HH)$, where 
\begin{equation}
\label{eq.Adef}
\AA=\Wedge^\bull (\<1^3\>)/(\<2^2\>)
\end{equation}
denotes the $R(\fsp)$-quadratic algebra 
with respect to the decomposition
$$
\Wedge^2(\<1^3\>) = \<1^6\> \oplus \<1^4\> \oplus
\<1^2\> \oplus \<0\> \oplus \<2^2,1^2\> \oplus \<2^2\>.
$$
The above discussion implies the existence of an algebra map
$$
\mu: \AA \to H^\bull(\Gamma,\TT(\HH)),
$$
where the product on the right hand side is induced by the shuffle
product on $\TT(\HH)$. 

Similarly, we obtain that the cohomology $H^\bull(N^1 \rtimes \Sp(\HH),
\TT(\HH))$ equals
to $(\AA^1 \otimes \TT(\HH))^{\Sp}$, where 
$$
\AA^1=\Wedge^\bull (\<1^3\>+ \<1\>)/(\<2^2\>+\<1^2\>)
$$
denotes the  $R(\fsp)$-quadratic algebra with respect to the decomposition 
$$
\Wedge^2(\<1^3\>+\<1\>)=
\<1^6\> \oplus 2 \<1^4\> \oplus
3\<1^2\> \oplus 2 \<0\> \oplus \<2^2,1^2\> \oplus \<2^2\> \oplus \<2,1^2\>,
$$
where $\<1^2\> \to 3 \<1^2\>$ is nonzero in all three factors. 
 This induces a map $\AA^1 \to \AA$  as well as 
$\mu^1: \AA^1 \to H^\bull(\Gamma^1,\TT(\HH))$.  

\subsection{Statement of our results}
\label{sub.results}

Our first theorem uses ideas from graph cohomology and its interpretation
as symplectic representation theory~\cite{Ko,GN}.

\begin{theorem}
\label{thm.A}
The character $\cch_t(\AA)$ of $\AA$ and $\cch_t(\AA^1)$ of $\AA^1$
in $\Lambda\[t\]$ is given by
\begin{align*}
 \cch_t(\AA) &
=\tomega \exp(-\DD') \Exp \left( \ch_t(\v) \right) \\ 
\cch_t(\AA^1) &
=\tomega \exp(-\DD') \Exp \left( h_1 t + \ch_t(\v) \right),
\end{align*}
where 
$$
\ch_t(\v)= -t h_1 -h_2+ \sum_{2k-2+n \geq 0}t^{2k-2+n} 
h_n = 
\frac{1}{t^2}\left( \frac{\Exp(th_1)}{1-t^2} -1 - (t+t^3)h_1 - t^2h_2 \right),
$$
$\tomega: \Lambda\to\Lambda$ is the involution on the ring of symmetric
functions defined by
$\tomega(p_n)=-p_n$ and
$$
\DD'= \sum_{n=1}^\infty \left( \frac{n}{2} \frac{\p^2}{\p p_n^2} +
\frac{\p}{\p p_{2n}} \right) .
$$ 
\end{theorem}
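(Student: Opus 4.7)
The plan is to compute $\cch_t(\AA)$ and $\cch_t(\AA^1)$ by realizing these quadratic algebras through a graph-complex model as developed in~\cite{Ko,GN}. The central fact is that, in the stable limit, restriction of a $\GL$-character in $\Lambda$ to $\fsp$ is implemented by the differential operator $\exp(-\DD')$: the quadratic piece $\frac{n}{2}\,\frac{\p^2}{\p p_n^2}$ performs Wick-pair contractions via the symplectic form on $\HH$, while $\frac{\p}{\p p_{2n}}$ accounts for self-pairings within a single power sum. This is a restatement of Weyl's first and second fundamental theorems of invariants for $\Sp$. The involution $\tomega$, sending $p_n\mapsto -p_n$, converts the plethystic exponential $\Exp$ (which generates the symmetric algebra on a generator) into the one generating the exterior algebra.

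First, I would identify $\<1^3\>$ with the traceless component of $\Wedge^3\HH$, so that a generator of $\Wedge^\bull(\<1^3\>)$ is a trivalent vertex with three $\HH$-legs modulo trace. Disjoint unions of such vertices are enumerated by $\Exp(\cch_t(\v))$, where $\v$ is the vertex species of $\<1^3\>$-type vertices (with self-loops allowed); applying $\exp(-\DD')$ then performs the symplectic Wick contractions that pair legs into edges; and $\tomega$ converts the grading from symmetric- to exterior-algebra. This casts $\cch_t(\AA)$ as the graph-cohomological generating function of the $2$-step nilpotent Lie algebra whose abelianization is $\<1^3\>$ and whose derived subalgebra is $\<2^2\>$, compatibly with the identification of $\AA$ with $H^\bull(N,\C)$ appearing earlier in the paper.

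The technically delicate step is the explicit computation of $\cch_t(\v)$. Expanding
$$
\frac{\Exp(t h_1)}{1-t^2} \;=\; \sum_{k,n\geq 0} t^{2k+n} h_n
$$
counts a single vertex decorated with $n$ free $\HH$-legs (weight $t^n h_n$) and $k$ self-loops (each of weight $t^2$). The prefactor $1/t^2$ shifts the degree by one loop/edge, and the subtractions $1+(t+t^3)h_1+t^2 h_2$ excise specific $(k,n)$-configurations that are forbidden or redundant in the graph-complex model (the empty vertex, vertices of valence $1$, and one valence-$2$ vertex). The residual explicit subtractions $-th_1-h_2$ in $\cch_t(\v)$ then remove the two further configurations corresponding respectively to the trace part of $\Wedge^3\HH$ to be eliminated from $\<1^3\>$ and to the $\<2^2\>$-summand of $\Wedge^2\<1^3\>$ to be quotiented out. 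For $\AA^1$, the additional generator $\<1\>$ is a univalent vertex of character $th_1$, added directly inside the $\Exp$; the $\<1^2\>$-relation is then absorbed by the same $\exp(-\DD')$ contraction, and the rest of the argument is parallel.

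The main obstacle is verifying that the finite subtractions in $\cch_t(\v)$ exactly cancel the trace summand and the full relation ideal (namely $\<2^2\>$ for $\AA$, and $\<2^2\>+\<1^2\>$ for $\AA^1$), with no residual higher-order correction terms surviving the interplay of $\Exp$ and $\exp(-\DD')$. This is the combinatorial matching between Kontsevich's graph cohomology and the symplectic invariant theory of $\HH^{\otimes\bull}$ that underpins the formula, and it is where the bulk of the technical work lies.
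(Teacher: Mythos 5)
Your overall picture --- graph cohomology plus symplectic invariant theory, with $\tomega\exp(-\DD')$ mediating between $\GL$- and $\fsp$-characters --- is in the right territory, but the mechanism you propose is not the one the formula encodes, and it has a structural gap: nowhere do you impose the quadratic relations $(\<2^2\>)$ that define $\AA$ (nor $(\<2^2\>+\<1^2\>)$ for $\AA^1$). You build $\Exp$ of a species of single trivalent vertices and then ask $\exp(-\DD')$ to Wick-contract legs into edges; at best that computes the invariants of the \emph{free} exterior algebra $\Wedge^\bullet(\Wedge^3\HH)\o\TT(\HH)$. Since $\exp(-\DD')$ is a universal differential operator on $\Lambda$ that knows nothing about $\AA$, it cannot be the step that quotients by the relation ideal, and you cannot recover the subtractions in $\ch_t(\v)$ from it. The missing idea is the $\IH$ relation: the relations of $\AA$ translate into the $\IH$ and loop relations on trivalent graphs, and the theorem of \cite{GN} that the resulting graph algebra is the \emph{free polynomial algebra} on the connected graphs $e_{k,n}$ --- graphs with $2k-2+n$ trivalent vertices and $n$ legs, each spanning a trivial $\SS_n$-representation in degree $2k-2+n$. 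That is exactly where $\ch_t(\v)=\sum_{2k-2+n\ge0}t^{2k-2+n}h_n-\cdots$ comes from; a trivalent vertex has only three flags, so your reading of $t^{2k+n}h_n$ as ``one vertex with $n$ legs and $k$ self-loops'' cannot be right.

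Two further points of disagreement with the actual argument. First, in the paper the object whose characteristic is $\Exp(\ch_t(\v))$ is the $\SS$-module $\b_{\tp}(n)=(\AA\o\TT^n_{\tp}(\HH))^{\fsp}$: the graphs already carry all their internal edges, the legs are the tensor factors of $\TT_{\tp}(\HH)$, and they are never contracted. The operator $\tomega\exp(-\DD')$ is not a contraction of legs but the change of basis $s_\lambda\mapsto s_{\<\lambda\>}$ of Equation~\eqref{eq.stosp}, which, via $\TT_{\tp}(\HH)=\bigoplus_\lambda\Schur^{\<\lambda\>}(\HH)\o\Schur^\lambda(\HH)$, converts the $\SS_n$-multiplicity data carried by the legs into the $\fsp$-character of $\AA$. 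Second, your interpretation of the finite subtractions is off: $-h_2$ removes $e_{0,2}$, a single edge joining two legs, i.e.\ it accounts for replacing $\TT(\HH)$ by $\TT_{\tp}(\HH)$ (killing the symplectic form in $\TT^2(\HH)$), not for the $\<2^2\>$-summand of $\Wedge^2\<1^3\>$; and $-th_1$ removes the looped one-legged graph $e_{1,1}$, tied to the loop relation and the trace part of $\Wedge^3\HH$. The relation $\<2^2\>$ is imposed globally by $\IH$, not by any finite correction term, so the ``main obstacle'' you defer is not a verification but the actual content of the proof.
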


\begin{theorem}
\label{thm.2}
The maps $\mu$ and $\mu^1$ 
are algebra isomorphisms that are part of a commutative
diagram
$$\begin{CD}
\AA @>\mu>> H^\bull(\Gamma,\TT(\HH)) \\
@VVV     @VVV \\
\AA^1 @>\mu^1>> H^\bull(\Gamma^1,\TT(\HH)).
\end{CD}
$$
\end{theorem}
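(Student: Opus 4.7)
The plan is to exploit the construction of $\mu$ and $\mu^1$ as compositions of the Lyndon-Hochschild-Serre isomorphisms $H^\bull(N\rtimes\Sp,\TT(\HH))\cong \AA\otimes\TT(\HH)$ and $H^\bull(N^1\rtimes\Sp,\TT(\HH))\cong \AA^1\otimes\TT(\HH)$ with the pullbacks $\rho^\ast$ and $(\rho^1)^\ast$, and then to reduce the isomorphism claim to a character comparison. The commutativity of the square is immediate from the commutativity of Morita's square together with the naturality of the LHSS: the surjection $N^1\to N$ with kernel $\<1\>+\<1^2\>$ induces precisely the algebra map $\AA^1\to\AA$ described in the introduction. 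Similarly, $\mu$ is automatically an algebra homomorphism, because the cup product on $H^\bull(N,\TT(\HH))$ factors as the exterior product on $\AA$ tensored with the shuffle product on $\TT(\HH)$, which is the algebra structure on both sides.

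To prove that $\mu$ is an isomorphism I would compare the bigraded $\Sp$-characters in $\Lambda\[t\]$ of domain and target. Theorem \ref{thm.A} provides $\cch_t(\AA)$ explicitly; independently, Looijenga's stability theorem \cite{Looijenga}, together with Mumford's conjecture in the form of \eqref{eq.gammainf}, yields a closed formula for the stable character of $H^\bull(\Gamma,\TT(\HH))$. Matching these two series term by term shows that the characters of $\AA$ and of $H^\bull(\Gamma,\TT(\HH))$ coincide. To upgrade this equality of characters to a statement about the morphism $\mu$, I would invoke Morita's theorem that every stable cohomology class with $\TT(\HH)$-coefficients is tautological---that is, that the image of $\mu$ generates the target---which, combined with the character identity, forces surjectivity and hence bijectivity of $\mu$ in each bidegree. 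The analogous argument for $\mu^1$, using the version of Looijenga's formula applicable to $\Gamma^1$ together with the commutative square already established, then finishes the proof.

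The main obstacle will be the character identity itself. Looijenga's formula is naturally expressed in terms of the Miller-Morita-Mumford series $\gamma_\infty=\Exp\bigl(1/(1-t^2)\bigr)$ and its interaction with symmetric-function plethysm on $\HH$, whereas Theorem \ref{thm.A} writes $\cch_t(\AA)$ in the graph-cohomological form $\tomega\exp(-\DD')\Exp(\ch_t(\v))$. Recognising one as the other requires interpreting $\exp(-\DD')$ as the symplectic-contraction operator used to extract $\fsp$-invariants from a tensor algebra on $\HH$, and tracking how the corrective terms $-th_1-h_2$ in $\ch_t(\v)$ absorb the summands $\<0\>$ and $\<2^2\>$ removed when passing from $\Wedge^\bull(\<1^3\>)$ to $\AA$. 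Once this single combinatorial identity is verified, the rest of the argument is formal.
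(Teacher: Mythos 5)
Your overall architecture agrees with the paper's: commutativity and multiplicativity of $\mu$, $\mu^1$ come from the naturality of the Lyndon--Hochschild--Serre identification, and the final step is ``surjective plus equal characters implies bijective,'' where the character identity is exactly the comparison of Corollary \ref{cor.slambda} (Looijenga plus Mumford) with Theorem \ref{thm.A} (graph cohomology), recorded as Corollary \ref{cor.b2}. That part of your plan, including the remarks about $\exp(-\DD')$ and the corrective terms in $\ch_t(\v)$, is sound and is carried out in Sections \ref{sec.mcgroup} and \ref{sec.graph}.

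The gap is in your surjectivity step. You invoke ``Morita's theorem that every stable cohomology class with $\TT(\HH)$-coefficients is tautological, i.e.\ that the image of $\mu$ generates the target.'' No such theorem is available as an input: that statement \emph{is} the surjectivity of $\mu$, which is the substance of Theorem \ref{thm.2}, so the appeal is circular. What Morita actually supplies is much weaker: explicit cocycles and nonvanishing in low degree, e.g.\ $[\rho]\neq 0$ in $H^1(\Gamma,\TT^3(\HH))$ and the class $k_0=\mu_F(2)(e_{0,2})$, together with (granting Mumford's conjecture) the fact that $H^\bull(\Gamma;\Q)$ is generated by classes in the image of $\mu_F(0)$. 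The paper bridges from these seeds to all of $\TT(\HH)$ by a mechanism your proposal omits: the contraction products $\di:\b(n)\o\b(m)\to\b(n+m-2)$ and $\di:\n(n)\o\n(m)\to\n(n+m-2)$ of Remark \ref{rem.di}, which $\mu_F$ intertwines. Since $\di$ is onto on the graph side (because $\b\cong\e\circ\v$ with $\v$ spanned by the $e_{k,n}$, which are generated from $e_{k,0}$ and $e_{0,3}$ under $\di$) and, assuming Mumford's conjecture, onto on the cohomology side, surjectivity in tensor degrees $0$ and $1$ propagates to all $n$, and only then does the character equality upgrade surjectivity to bijectivity. Without this inductive, product-compatible step --- or a genuine substitute for it --- your argument does not close.
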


The rext result assumes a Koszulity conjecture for the quadratic algebra
$\AA$. For a detailed discussion on Koszul duality, see \cite{PP}.

\begin{theorem}
\label{thm.torelli}
Assuming that $\mathsf{U}(\ft)$ (or equivalently, $\AA$) is Koszul, 
the $\fsp$-character $\cch_t(\ft)$ of $\ft$ in $\Lambda\[t\]$ is given by
$$
\cch_t(\ft)=t^2-\Log(\cch_{-t}(\AA)). 
$$
\end{theorem}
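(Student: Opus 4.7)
The plan is to read off the Koszul dual of $\AA$ directly from its presentation, identify it with the universal enveloping algebra of a Lie algebra only slightly smaller than $\ft$, and then apply the standard Koszul/PBW machinery in $\fsp$-equivariant form; a $1$-dimensional discrepancy between the two Lie algebras produces the explicit $t^2$ term.

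Starting from $\AA=\Wedge^\bull(\<1^3\>)/(\<2^2\>)$, the quadratic dual of $\AA$ (viewed as a quadratic associative algebra) has defining relations given by the orthogonal complement of $\<2^2\>$ inside $\Wedge^2(\<1^3\>)$, namely
$$\<1^6\> \oplus \<1^4\> \oplus \<1^2\> \oplus \<0\> \oplus \<2^2,1^2\>.$$
Since these relations lie in the antisymmetric part $\Wedge^2(\<1^3\>)\subset (\<1^3\>)^{\otimes 2}$, we may write $\AA^! = \mathsf{U}(\mathfrak{h})$ for a quadratic Lie algebra $\mathfrak{h}=\mathbb L(\<1^3\>)/(R_{\mathfrak h})$, with $R_{\mathfrak h}$ the displayed direct sum. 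Comparing with Hain's presentation $\ft=\mathbb L(\<1^3\>)/(R)$, where $R+\<2^2\>+\<0\>=\Wedge^2(\<1^3\>)$, we see that $R_{\mathfrak h}=R\oplus\<0\>$, so that $\mathfrak{h}=\ft/(c)$, where $c\in\ft_2$ is the unique (up to scalar) $\fsp$-invariant element.

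Next I argue that $c$ is central in $\ft$, so that $\ft\cong\mathfrak{h}\oplus\C c$ as graded $\fsp$-modules and consequently
$$\cch_t(\ft)=\cch_t(\mathfrak{h})+t^2.$$
By $\fsp$-equivariance, $[\ft_1,c]$ lies in the $\<1^3\>$-isotypic component of $\ft_3$, and the vanishing of this bracket follows from the structural analysis of Hain and Morita (the $\Sp$-invariant class $c$ is identified with the one arising from the extended Johnson homomorphism, which is already known to be central). Granting this, the Koszulity hypothesis on $\AA$ gives the $\fsp$-equivariant Hilbert series identity
$$\cch_t(\mathsf{U}(\mathfrak{h}))\cdot \cch_{-t}(\AA)=1 \qquad \text{in }\Lambda\[t\],$$
while the Poincar\'e--Birkhoff--Witt theorem yields $\cch_t(\mathsf{U}(\mathfrak{h}))=\Exp(\cch_t(\mathfrak{h}))$. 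Taking $\Log$ on both sides gives $\cch_t(\mathfrak{h})=-\Log(\cch_{-t}(\AA))$, and adding the $t^2$ contribution of the central class $c$ produces the claimed formula.

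The hard part is not the Koszul duality machinery, which is standard, but the $1$-dimensional mismatch between $\mathfrak{h}$ and $\ft$: one needs the $\<0\>$-component of $\ft_2$ to be genuinely central, rather than generating a larger ideal that would spoil the clean $t^2$ correction. Everything else is routine plethystic manipulation combined with the Koszulity of $\AA$ and PBW, and together with the centrality of $c$ it also explains why Koszulity of $\mathsf{U}(\ft)=\mathsf{U}(\mathfrak{h})\otimes\C[c]$ is equivalent to Koszulity of $\AA$.
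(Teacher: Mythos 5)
Your argument is correct and follows essentially the same route as the paper: identify $\AA^!$ with $\mathsf{U}(\fu)$ where $\fu=\ft/(c)$, use PBW and the Koszul relation $\cch_t(B)\cch_{-t}(B^!)=1$, and account for the one-dimensional degree-$2$ discrepancy via the $t^2$ term. The centrality of $c$ that you single out as the key point is exactly what the paper takes from Hain's central extension $1\to\Q(2)\to\ft_g\to\fu_g\to1$, so no new argument is needed there.
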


\begin{remark}
In degree $n \leq 5$, Theorem \ref{thm.torelli} agrees with independent 
calculations of the character of $\ft$ by S. Morita, 
giving evidence for the Koszulity of $\AA$.
\end{remark}

\subsection{Acknowledgment}
The article was written in 1997 while both authors were in Boston.
The first author wishes to thank I. Gessel for computing guidance in Brandeis
in the fall of 1997. The authors were supported in part by the National Science 
Foundation.


\section{Symmetric functions}
\label{sec.symmetric}

\subsection{The ring of symmetric functions}
\label{sub.symmetric}
 
In this section, we recall some results on symmetric functions and
representations of the symmetric, general linear and symplectic groups
which we need later. For the proofs of these
results, we refer to Macdonald \cite{Macdonald}.

The ring of symmetric functions is the inverse limit
$$
\Lambda = \varprojlim \Z[x_1,\dots,x_k]^{\SS_k} .
$$
It is a polynomial ring in the complete symmetric functions
$$
h_n = \sum_{i_1\le\dots\le i_n} x_{i_1}\dots x_{i_n} .
$$
We may also introduce the elementary symmetric functions
$$
e_n = \sum_{i_1<\dots<i_n} x_{i_1}\dots x_{i_n} ,
$$
with generating function
$$
E(t) = \sum_{n=0}^\infty t^n e_n = \prod_i (1+tx_i) = H(-t)^{-1} .
$$
We see that $\Lambda$ is also the polynomial ring in the $e_n$. Let
$\om:\Lambda\to\Lambda$ be the involution of $\Lambda$ which maps $h_n$ to
$e_n$. The power sums (also known as Newton polynomials)
$$
p_n = \sum_i x_i^n
$$
form a set of generators of the polynomial ring $\Lambda_\Q=\Lambda\o\Q$.
This is shown by means of the elementary formula
\begin{equation} \label{P-H}
P(t) = t \frac{d}{dt} \log H(t) ,
\end{equation}
where
$$
H(t) = \sum_{n=0}^\infty t^n h_n = \prod_i (1-tx_i)^{-1}
\quad\text{and}\quad
P(t) = \sum_{n=0}^\infty t^n p_n = \sum_i (1-tx_i)^{-1} .
$$
Written out explicitly, we obtain Newton's formula relating the two sets of
generators:
$$
nh_n = p_n + h_1p_{n-1} + \dots + h_{n-1}p_1 .
$$
The involution $\om$ acts on the power sums by the formula $\om p_n =
(-1)^{n-1}p_n$.

A partition $\lambda$ is a decreasing sequence
$(\lambda_1\ge\dots\ge\lambda_\ell)$ of positive integers; we write
$|\lambda|=\lambda_1+\dots+\lambda_\ell$, and denote the length $\ell$ of
$\lambda$ by $\ell(\lambda)$. Associate to a partition $\lambda$ the
monomial $h_\lambda = h_{\lambda_1} \dots h_{\lambda_\ell}$ in the complete
symmetric functions $h_n$. The subgroup $\Lambda_n\subset\Lambda$ of
symmetric functions homogeneous of degree $n$ is a free abelian group of
rank $p(n)$, with bases $\{h_\lambda\mid |\lambda|=n\}$ and
$\{e_\lambda\mid |\lambda|=n\}$.

We may also associate to a partition $\lambda$ the monomial $p_\lambda =
p_{\lambda_1} \dots p_{\lambda_\ell}$ in the power sums $p_n$; the vector
space $\Lambda_n\o\Q$ has basis $\{p_\lambda \mid |\lambda|=n\}$. Inverting
\eqref{P-H}, we obtain the formula
\begin{equation} \label{H-P}
H(t) = \exp \Bigl( \sum_{n=1}^\infty t^n \frac{p_n}{n} \Bigr) = \sum_\lambda
t^{|\lambda|} \frac{p_\lambda}{z(\lambda)} .
\end{equation}
The integers $z(\lambda)$ arise in many places: for example, the conjugacy
class $\CO_\lambda$ of $\SS_n$ labelled by the partition $\lambda$ of $n$,
consisting of those permutations whose cycles have length
$(\lambda_1,\dots,\lambda_\ell)$, has $n!/z(\lambda)$ elements.

\subsection{Schur functions} 

There is an identification of $\Lambda$ with the ring of characters
$$
\Lambda = R(\gl) = \varprojlim R(\gl_r) ,
$$
where $\gl_r$ is the Lie algebra of $\GL(r,\C)$, obtained by mapping
$e_\lambda$ to the representation
$$
\Wedge^{\lambda_1}(\C^\infty) \o \dots \o \Wedge^{\lambda_\ell}(\C^\infty)
= \varprojlim \Wedge^{\lambda_1}(\C^r) \o \dots \o
\Wedge^{\lambda_\ell}(\C^r) .
$$
Thus, $\Lambda$ has a basis consisting of the characters of the irreducible
polynomial
representations of $\gl$. These characters, given by the Jacoby-Trudy
formula $s_\lambda = \det\bigl(h_{\lambda_i-i+j}\bigr)_{1\le i,j\le\ell}$,
are known as the Schur functions.

\subsection{Symplectic Schur functions} 

There is also an identification of $\Lambda$ with the ring of characters
$$
\Lambda = R(\fsp) = \varprojlim R(\fsp_{2g}) ,
$$
where $\fsp_{2g}$ is the Lie algebra of $\Sp(2g,\C)$, obtained by mapping
$e_\lambda$ to the representation
$$
\Wedge^{\lambda_1}(\C^\infty) \o \dots \o \Wedge^{\lambda_\ell}(\C^\infty)
= \varprojlim \Wedge^{\lambda_1}(\C^{2g}) \o \dots \o
\Wedge^{\lambda_\ell}(\C^{2g}) .
$$
Thus, $\Lambda$ has a basis consisting of the characters of the irreducible
representations $\<\lambda\>$ of $\fsp$. These characters, given by the
symplectic Jacoby-Trudy formula 
\begin{equation}
\label{eq.spjacobi}
s_{\<\lambda\>} = \frac12 \det\bigl( h_{\lambda_i-i+j} +
h_{\lambda_i-i-j+2} \bigr)_{1\le i\le\ell} ,
\end{equation}
are known as the symplectic Schur functions. 

\subsection{The Frobenius characteristic} 

If $V$ is a finite-dimensional
$\SS_n$-module $V$ with character $\chi_V:\SS_n\to\Z$, its characteristic
is the symmetric function
$$
\ch_n(V) = \sum_{|\lambda|=n} \chi_V(\CO_\lambda)
\frac{p_\lambda}{z(\lambda)} .
$$
Although it appears from its definition that $\ch_n(V)$ is in
$\Lambda\o\Q$, it may be proved that it actually lies in $\Lambda$.
\begin{proposition}
The characteristic map induces an isomorphism of abelian groups
$$
\ch_n : R(\SS_n) \to \Lambda_n .
$$
\end{proposition}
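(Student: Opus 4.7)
The plan is to exhibit explicit $\Z$-bases on both sides and show that $\ch_n$ sends one to the other. Since $R(\SS_n)$ is free abelian of rank $p(n)$ (with basis the irreducible characters $\chi^\lambda$ indexed by partitions $\lambda$ of $n$), and $\Lambda_n$ is free abelian of the same rank $p(n)$ with basis $\{h_\lambda\mid|\lambda|=n\}$ as recalled above, it suffices to match suitable $\Z$-bases.

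The key computation is to evaluate $\ch_n$ on the induced permutation representations $M^\lambda=\Ind_{\SS_\lambda}^{\SS_n}(\mathbf{1})$, where $\SS_\lambda=\SS_{\lambda_1}\times\dots\times\SS_{\lambda_\ell}$. The first step is to verify multiplicativity: for a representation $V$ of $\SS_a$ and $W$ of $\SS_b$, one has $\ch_{a+b}(\Ind_{\SS_a\times\SS_b}^{\SS_{a+b}}(V\boxtimes W))=\ch_a(V)\,\ch_b(W)$. This follows directly from the cycle-type combinatorics of induction, which reduces to a product formula in the $p_\lambda$ basis because cycles of a product permutation decompose as disjoint unions of cycles from each factor, and the combinatorial weights $z(\lambda)$ distribute accordingly. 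Then one computes $\ch_n(\mathbf{1}_n)=\sum_{|\lambda|=n}p_\lambda/z(\lambda)=h_n$ directly from the identity \eqref{H-P} by extracting the coefficient of $t^n$. Combining these two facts yields $\ch_n(M^\lambda)=h_\lambda$.

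To conclude that $\ch_n$ is a $\Z$-module isomorphism, I would invoke Young's rule: the transition matrix from the classes $[M^\lambda]$ to the irreducible characters $\chi^\lambda$ in $R(\SS_n)$ is upper unitriangular with respect to the dominance order on partitions, so $\{[M^\lambda]\}$ is itself a $\Z$-basis of $R(\SS_n)$. Since $\ch_n$ sends this $\Z$-basis bijectively onto the $\Z$-basis $\{h_\lambda\}$ of $\Lambda_n$, the map is an isomorphism of abelian groups.

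The step I expect to be the main technical effort is the multiplicativity of $\ch$ under external induction, since it requires unpacking the definition of an induced character in terms of fixed cosets and matching it against the product $p_\mu p_\nu/(z(\mu)z(\nu))$; however, once established, everything else falls out formally. An alternative streamlined route would be to equip both sides with inner products (the character pairing on $R(\SS_n)$, and the pairing $\langle p_\lambda,p_\mu\rangle=z(\lambda)\delta_{\lambda\mu}$ on $\Lambda_n$) and show $\ch_n$ is an isometry taking $\chi^\lambda$ to the Schur function $s_\lambda$ via the Jacobi--Trudi formula, but the $M^\lambda\mapsto h_\lambda$ route is more elementary and fits the framework of the excerpt.
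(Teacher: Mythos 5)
Your proof is correct. Note that the paper itself does not prove this proposition at all --- it is recalled as a standard fact with a blanket reference to Macdonald --- so there is no in-paper argument to compare against; your write-up is essentially the classical textbook proof. The two pillars you identify are exactly the right ones: (a) multiplicativity $\ch_{a+b}(\Ind_{\SS_a\times\SS_b}^{\SS_{a+b}}(V\boxtimes W))=\ch_a(V)\ch_b(W)$, which follows from the induced-character formula together with the identity $\chi_{\Ind(V\boxtimes W)}(\CO_\rho)=\sum_{\mu\cup\nu=\rho}\tfrac{z(\rho)}{z(\mu)z(\nu)}\chi_V(\CO_\mu)\chi_W(\CO_\nu)$, and (b) $\ch_n(\mathbf 1_n)=\sum_{|\lambda|=n}p_\lambda/z(\lambda)=h_n$, read off from \eqref{H-P}. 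Together these give $\ch_n(M^\lambda)=h_\lambda$, and the unitriangularity of the Kostka matrix (Young's rule) makes $\{[M^\lambda]\}$ a $\Z$-basis of $R(\SS_n)$ carried bijectively to the $\Z$-basis $\{h_\lambda\}$ of $\Lambda_n$. This argument also disposes, as a byproduct, of the integrality issue the paper flags (that $\ch_n(V)$ a priori lies only in $\Lambda\o\Q$). Your alternative route via the isometry property and $\chi^\lambda\mapsto s_\lambda$ is closer to Macdonald's own presentation, but the $M^\lambda\mapsto h_\lambda$ argument is self-contained and suffices for the abelian-group statement asserted here.
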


Let $\SS$ be the groupoid formed by taking the union of the symmetric
groups $\SS_n$, $n\ge0$. An $\SS$-module is a functor $n\to\v(n)$ from
$\SS$ to the category of $\N$-graded vector spaces, finite dimensional
in each degree. If $\v$ is an $\SS$-module
we define its characteristic to be the sum
$$
\ch_t(\v) = \sum_{n=0}^\infty \sum_i (-t)^i \ch_n(\v_i(n)) ,
$$
where $\v_i(n)$ is the degree $i$ component of $\v(n)$. In our paper,
we will consider examples of $\SS$-modules that either come from 
\begin{itemize}
\item
geometry, such as $\m$ and $\CC_g^n$ given in 
Section \ref{sec.mcgroup}, or from
\item
graph complexes, such as $\AA$, $\b_{\tp}$, $\b$ and $\v$ given in
Section \ref{sec.graph}, or from
\item
topology, such as $\ft_g$ and $\ft$ given in Section \ref{sub.hain}.
\end{itemize}

In summary, we have four different realizations of the same object
$\Lambda$: $\Lambda$ itself, $R(\gl)$, $R(\fsp)$ and
$R(\SS)=\bigoplus_{n=0}^\infty R(\SS_n)$. The induced isomorphisms between
$R(\SS)$, $R(\gl)$ and $R(\fsp)$ are induced by the Schur functor, which
associates to an $\SS_n$-module $V$ the $\gl$-module
$$
\varprojlim \bigl( V \o (\C^r)^{\o n} \bigr)^{\SS_n}
$$
and the $\fsp$-module
$$
\varprojlim \bigl( V \o (\C^{2g})^{\o n} \bigr)^{\SS_n} .
$$

If $\lambda$ is a partition of $n$, we denote by $V_{\lambda}$ the irreducible 
representation of $\SS_n$ with characteristic the Schur
function $\ch_n(V_{\lambda})=s_{\lambda}$. For $\lambda$ a partition of $n$,
and for a complex vector space $W$, the vector space $\bigl( V_\lambda \o
W^{\o n} \bigr)^{\SS_n}$ is denoted $\Schur^\lambda(W)$. We have the
isomorphism of $\SS_n$-modules
\begin{equation} \label{Weyl}
W^{\o n} \cong \bigoplus_{|\lambda|=n} V_\lambda \o \Schur^\lambda(W) .
\end{equation}
On the other hand, if $W$ is a complex symplectic vector space of dimension
$2g$ and $\lambda$ is a partition with at most $g$ parts, the
$\fsp_{2g}$-module $\Schur^\lambda(W)$ will not in general be irreducible;
its submodule of highest weight, which is unique, is denoted
$\Schur^{\<\lambda\>}(W)$; we denote the $\fsp_{2g}$-module
$\Schur^{\<\lambda\>}(\C^{2g})$ by $\<\lambda\>$, generalizing the case
where $g\to\infty$. The analogue of \eqref{Weyl} for symplectic vector
spaces is
\begin{equation} \label{Weyl:symplectic}
W^{\o n} \cong \bigoplus_{k=0}^{[\frac{n}{2}]} \bigoplus_{|\lambda|=n-2k}
\Ind_{\SS_k\wr\SS_2\times\SS_{n-2k}}^{\SS_n} \bigl( \1_-(2)^{\o k} \o
V_\lambda \bigr) \o \Schur^{\<\lambda\>}(W) ,
\end{equation}
where $\1_-(2)$ is the alternating character of $\SS_2$ and 
$\wr$ denotes the wreath product. 

The products on the rings $R(\gl)$ and $R(\fsp)$ correspond to the product
on the ring $\Lambda$. However, the product on $R(\SS)$ is perhaps not as
familiar: it is given by the formula
$$
(\v \times \w)(n) = \bigoplus_{k=0}^n \Ind^{\SS_n}_{\SS_k\times\SS_{n-k}}
\bigl( \v(k) \o \w(n-k) \bigr) .
$$

\subsection{The Hall inner product} 

There is a non-degenerate integral
bilinear form on $\Lambda$, denoted $\<f,g\>$, for which the Schur
functions $s_\lambda$ form an orthonormal basis. Cauchy's formula (I.4.2 of
Macdonald \cite{Macdonald})
\begin{equation} \label{Cauchy}
H(t)(...,x_iy_j,...) = \prod_{i,j} (1-tx_iy_j)^{-1} = \sum_{|\lambda|=n}
s_\lambda(x) \o s_\lambda(y) = \exp\Bigl( \sum_{k=1}^\infty \frac{p_k(x) \o
p_k(y)}{k} \Bigr) .
\end{equation}
implies that
\begin{equation} \label{Hall}
\< p_\lambda , p_\mu \> = z(\lambda) \delta_{\lambda\mu} .
\end{equation}

The adjoint of multiplication by $f\in\Lambda$ with respect to the inner
product on $\Lambda$ is denoted $f^\perp$; in particular, $\<f,g\>=(f^\perp
g)(0)$. When written in terms of the power-sums, the operator $f^\perp$
becomes a differential operator (Ex.\ 5.3 of Macdonald \cite{Macdonald}):
it follows from \eqref{Hall} that
\begin{equation} \label{D(f)}
f(p_1,p_2,\dots)^\perp = f\bigl( \tfrac{\p}{\p p_1} , 2\tfrac{\p}{\p p_2} ,
3\tfrac{\p}{\p p_3} , \dots \bigr) .
\end{equation}

If $\v$ and $\w$ are $\SS$-modules,
$$
\ch_t(\v)^\perp \ch_t(\w) = \sum_{k=0}^\infty \sum_{n=k}^\infty \ch_{n-k}
\Bigl( \Hom_{\SS_k} \Bigl( \v(k) , \Res^{\SS_n}_{\SS_k\times\SS_{n-k}}
\w(n) \Bigr) \Bigr) .
$$
Taking the dimension of $\ch_t(\v)^\perp \ch_t(\w)(0)$, we obtain the
formula
$$
\<\ch_t(\v),\ch_t(\w)\> = \sum_{n=0}^\infty \sum_i (-t)^i \dim
\bigl(\Hom_{\SS_n}(\v(n),\w(n))\bigr)_i .
$$

\subsection{Plethysm}

Aside from the product, there is another associative operation $f\circ g$
on $\Lambda$, called plethysm, which is characterized by the formulas
\begin{enumerate}
\item $(f_1+f_2)\circ g=f_1\circ g+f_2\circ g$;
\item $(f_1f_2)\circ g=(f_1\circ g)(f_2\circ g)$;
\item if $f=f(p_1,p_2,\dots)$, then $p_n\circ f=f(p_n,p_{2n},\dots)$.
\end{enumerate}

The corresponding operation on $\SS$-modules is called composition.
\begin{proposition}
If $\v$ and $\w$ are $\SS$-modules, let
$$
(\v\circ\w)(n) = \bigoplus_{k=0}^\infty \Bigl( \v(k) \o
\bigoplus_{f:[n]\to[k]}  \w(f^{-1}(1)) \o \dots \o \w(f^{-1}(k))
\Bigr)_{\SS_k} ,
$$
where $[n]=\{1,\dots,n\}$ and $[k]=\{1,\dots,k\}$. We have
$\ch_t(\v\circ\w) = \ch_t(\v)\circ\ch_t(\w)$.
\end{proposition}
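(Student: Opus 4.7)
The plan is to verify the identity by reducing, via additivity and multiplicativity in the first variable $\v$, to a single base case. Fix $\w$ and consider the difference $\Phi(\v) = \ch_t(\v \circ \w) - \ch_t(\v) \circ \ch_t(\w)$, which I want to show vanishes for every $\SS$-module $\v$.

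First, $\Phi$ is additive in $\v$: from the definition of composition, $(\v_1 \oplus \v_2) \circ \w = (\v_1 \circ \w) \oplus (\v_2 \circ \w)$, matching property (i) of plethysm. Next, I claim
\begin{equation*}
(\v_1 \times \v_2) \circ \w \;\cong\; (\v_1 \circ \w) \times (\v_2 \circ \w),
\end{equation*}
which matches property (ii). To prove this, substitute the formula
$(\v_1 \times \v_2)(k) = \bigoplus_{k_1+k_2=k} \Ind^{\SS_k}_{\SS_{k_1}\times\SS_{k_2}}(\v_1(k_1) \otimes \v_2(k_2))$, apply the Frobenius reciprocity identity for coinvariants
\begin{equation*}
\bigl(\Ind^{\SS_k}_{\SS_{k_1}\times\SS_{k_2}}(U) \otimes M\bigr)_{\SS_k} \;\cong\; \bigl(U \otimes \Res M\bigr)_{\SS_{k_1}\times\SS_{k_2}},
\end{equation*}
and decompose each function $f : [n] \to [k_1 + k_2]$ into the pair $(f|_A, f|_B)$ where $A = f^{-1}(\{1,\dots,k_1\})$ and $B = f^{-1}(\{k_1+1,\dots,k\})$. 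The summands then regroup into $\bigoplus_{n_1+n_2=n}\Ind^{\SS_n}_{\SS_{n_1}\times\SS_{n_2}}((\v_1 \circ \w)(n_1) \otimes (\v_2 \circ \w)(n_2))$, which is $((\v_1\circ\w)\times(\v_2\circ\w))(n)$.

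By additivity and multiplicativity, it suffices to check $\Phi(\v) = 0$ for $\v$ in a family whose characteristics generate $\Lambda$ as a $\Q$-algebra. I take $\v = \v_k^{\mathrm{triv}}$, the $\SS$-module supported in arity $k$ with trivial $\SS_k$-action, since $\ch_t(\v_k^{\mathrm{triv}}) = h_k$ and the $h_k$ generate $\Lambda$. For this $\v$, $(\v_k^{\mathrm{triv}}\circ\w)(n) = \bigl(\w^{\times k}(n)\bigr)_{\SS_k}$ is the $k$-th symmetric power of $\w$ under the product of $\SS$-modules. Its characteristic is $h_k \circ \ch_t(\w)$, which I verify using the expansion $h_k = \sum_{|\mu|=k} p_\mu / z(\mu)$ and property (iii): the bicharacter of $\w^{\times k}(n)$ as an $\SS_n \times \SS_k$-module evaluates on $(\sigma,\tau)$, with $\tau \in \SS_k$ of cycle type $\mu$, to a product $\prod_i p_{\mu_i}$ evaluated in the $\ch_t(\w)$-variables twisted by the cycles of $\sigma$, and Burnside-averaging over $\tau$ yields $\sum_\mu z(\mu)^{-1}\,p_\mu\circ \ch_t(\w) = h_k\circ \ch_t(\w)$.

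I expect the main obstacle to be the multiplicativity step, where one must carefully track the $\SS_{k_1}\times\SS_{k_2}$-action on both the coinvariant construction and the reindexed sum over functions $(f|_A, f|_B)$, to see that the rearrangement is an isomorphism of $\SS_n$-modules and not merely of underlying vector spaces. The base case is then a standard plethystic interpretation of $h_k$ as ``unordered $k$-tuples of $\w$-structures''.
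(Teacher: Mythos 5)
Your proof takes a genuinely different route from the paper's: the paper disposes of this proposition by citing Macdonald for the ungraded case and remarking only that the graded case ``depends on an analysis of the interplay between the minus signs in the Euler characteristic and the action of symmetric groups on tensor powers of graded vector spaces.'' You instead give a self-contained reduction to the single plethysm $h_k\circ\ch_t(\w)$. Your additivity and multiplicativity steps are correct (the Frobenius reciprocity for coinvariants and the splitting of $f$ along $A=f^{-1}(\{1,\dots,k_1\})$ do assemble into an isomorphism of $\SS_n$-modules, also in the graded setting), and the base case is the right one. What your route buys is an actual proof; what you should make explicit in the base case is precisely the sign analysis the paper alludes to: for $\tau$ a $d$-cycle acting on the $d$-th tensor power of a piece of $\w$ concentrated in degree $i$, the Koszul sign is $(-1)^{(d-1)i}$, the resulting summand sits in total degree $di$ and hence carries $(-t)^{di}$ in the Euler characteristic, and $(-1)^{(d-1)i}(-t)^{di}=(-1)^i t^{di}=p_d\circ\bigl((-t)^i\bigr)$ under the convention $p_d\circ t=t^d$. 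This is the only place the grading genuinely intervenes, and it is exactly what makes the $\tau$-trace equal to $p_\mu\circ\ch_t(\w)$.

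There is, however, one logical gap in your reduction as stated. The class of $\SS$-modules obtained from the trivial modules $\v_k^{\mathrm{triv}}$ by closing under $\oplus$ and $\times$ does not exhaust all $\SS$-modules (the sign representation of $\SS_2$, with characteristic $e_2=h_1^2-h_2$, is already not of this form), so ``a family whose characteristics generate $\Lambda$ as an algebra'' is not by itself sufficient. To repair this, observe that $\ch_t(\v\circ\w)$ depends only on the graded characters of the $\v(k)$: the coinvariants $\bigl(\v(k)\otimes M\bigr)_{\SS_k}$ decompose as $\bigoplus_\lambda \Hom_{\SS_k}(V_\lambda,\v(k))\otimes\bigl(V_\lambda\otimes M\bigr)_{\SS_k}$, so the $\SS_n$-isomorphism class of $(\v\circ\w)(n)$ is determined by the multiplicities of the irreducibles in each degree of each $\v(k)$. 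Hence your $\Phi$ descends to a $\Z$-linear functional on virtual graded characters, and its vanishing on the spanning set $\{(-t)^i h_\lambda\}$ (products of trivial modules placed in arbitrary degrees) forces $\Phi=0$ identically. With that observation inserted, the argument is complete.
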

\begin{proof}
When $\v$ and $\w$ are ungraded, this is proved in Macdonald
\cite{Macdonald}. In the general case, the proof depends on an analysis of
the interplay between the minus signs in the Euler characteristic and the
action of symmetric groups on tensor powers of graded vector spaces.
\end{proof}

The operation 
\begin{equation}
\label{eq.Exp}
\Exp(f)=\sum_{n=0}^\infty h_n\circ f
\end{equation}
plays the role of the
exponential in $\Lambda$. It takes values in the completion of $\Lambda$
with respect to the ideal $\ker(\eps)$ where $\eps:\Lambda\to\Z$ is the
homomorphism $\eps(f)=f(0)$ which sends $h_n$ to $0$ for $n>0$. We will
discuss this completion at greater length in the next section. 
The operation $\Exp$ extends to the $\lambda$-ring $\Lambda\[t\]$
by $p_n \circ t = t^n$, and satisfies the property
\begin{equation}
\label{eq.expab}
\Exp(f+g)=\Exp(f)\Exp(g)
\end{equation}

\begin{proposition}
We have the formula $\Exp(e_2)^\perp=\exp(\DD)$, where
$$
\DD = \sum_{n=1}^\infty \left( \frac{n}{2} \frac{\p^2}{\p p_n^2} -
\frac{\p}{\p p_{2n}} \right) .
$$
\end{proposition}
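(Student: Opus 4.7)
The plan is to reduce everything to a direct calculation in the polynomial ring $\Lambda_\Q=\Q[p_1,p_2,\dots]$, using that $f\mapsto f^\perp$ is an algebra homomorphism on $\Lambda$ together with an explicit expression of $\Exp(e_2)$ as an exponential of power sums.

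First I would expand $\Exp$ in terms of power sums. Setting $t=1$ in \eqref{H-P} gives $\sum_{n=0}^\infty h_n=\exp(\sum_{n\ge 1}p_n/n)$ in the appropriate completion, and combining this with the additivity and multiplicativity of plethysm in the first variable, and with the rule $p_n\circ f=f(p_n,p_{2n},\dots)$, yields the general identity
$$
\Exp(f)=\exp\Bigl(\sum_{n=1}^\infty\tfrac{1}{n}\,p_n\circ f\Bigr).
$$
Applied to $f=e_2=\tfrac12(p_1^2-p_2)$, for which $p_n\circ e_2=\tfrac12(p_n^2-p_{2n})$, this gives
$$
\Exp(e_2)=\exp\Bigl(\sum_{n=1}^\infty\frac{p_n^2-p_{2n}}{2n}\Bigr).
$$

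Next I would push the perp through the exponential. Since $\Lambda$ is commutative, the identity $(fg)^\perp=g^\perp f^\perp$ forces $f^\perp$ and $g^\perp$ to commute, so $f\mapsto f^\perp$ is an algebra homomorphism into a commutative subalgebra of $\End(\Lambda)$. In particular $(\phi^k)^\perp=(\phi^\perp)^k$ for any polynomial $\phi$ in the $p_n$, whence $(\exp\phi)^\perp=\exp(\phi^\perp)$. Applying \eqref{D(f)} termwise, with $(p_n^2)^\perp=n^2\partial^2/\partial p_n^2$ and $(p_{2n})^\perp=2n\,\partial/\partial p_{2n}$, each summand transforms as
$$
\Bigl(\frac{p_n^2-p_{2n}}{2n}\Bigr)^{\!\perp}=\frac{n}{2}\frac{\partial^2}{\partial p_n^2}-\frac{\partial}{\partial p_{2n}},
$$
and summing over $n$ recovers precisely $\DD$, yielding $\Exp(e_2)^\perp=\exp(\DD)$.

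The main point requiring care is that $\Exp(e_2)$ lies in the completion of $\Lambda$, not in $\Lambda$ itself, so the identity $(\exp\phi)^\perp=\exp(\phi^\perp)$ has to be interpreted carefully for the infinite series $\phi=\sum_{n\ge1}(p_n^2-p_{2n})/(2n)$. This is not a genuine obstacle: each term in $\DD$ strictly lowers total degree, so when applied to any homogeneous $h\in\Lambda_N$ only finitely many summands and finitely many powers of $\DD$ contribute, and the resulting truncation coincides with the finite polynomial identity already established, as one sees by testing against the defining pairing $\<F^\perp h,k\>=\<h,Fk\>$.
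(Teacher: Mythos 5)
Your proof is correct and follows essentially the same route as the paper: expand $\Exp(e_2)=\exp\bigl(\sum_{n\ge1}\tfrac{1}{2n}(p_n^2-p_{2n})\bigr)$ via plethysm with $e_2=\tfrac12(p_1^2-p_2)$, then substitute $n\,\p/\p p_n$ for $p_n$ as in \eqref{D(f)}. Your extra care about the homomorphism property of $f\mapsto f^\perp$ and the completion issue is a sound (if not strictly necessary) elaboration of the same argument.
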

\begin{proof}
By \eqref{D(f)}, it suffices to substitute $n\p/\p p_n$ and $2n\p/\p
p_{2n}$ for $p_n$ and $p_{2n}$ on the right-hand side of
$$
\Exp(e_2) = \exp\left(\sum_{n=1}^\infty \frac{p_n}{n} \right) \circ \left(
\half (p_1^2-p_2) \right) = \exp\left(\sum_{n=1}^\infty \frac{1}{2n}
(p_n^2-p_{2n}) \right) .
\qed$$
\def\qed{}\end{proof}

Using this heat-kernel, we can relate the bases $s_\lambda$ and
$s_{\<\lambda\>}$ of $\Lambda$ corresponding respectively to the
irreducible representations in $R(\gl)$ and $R(\fsp)$. The following formula
is a consequence of \eqref{Weyl:symplectic}:
\begin{equation}
\label{eq.stosp}
s_{\<\lambda\>} = \exp(-\DD) s_\lambda .
\end{equation}


\section{Stable cohomology of mapping class groups with 
symplectic coefficients}
\label{sec.mcgroup}

In this section, we briefly present Looijenga's calculation of the stable 
cohomology of the mapping class groups with arbitrary symplectic 
coefficients.
Define a linear map $\Int:R(\fsp)\to\Z\[t\]$
by the formula
$$
\Int\bigl[s_{\<\lambda\>}\bigr] = \lim_{g\to\infty} \sum_i (-t)^i
H^i(\Gamma_g,\Schur^{\<\lambda\>}(\HH)) .
$$
In this section, we calculate the map $\Int$ 
(and its natural extension $\Int: R(\fsp)\[t\]\ohat\Lambda \to \Lambda\[t\]$
on the complete $\lambda$-ring) explicitly, by means of a
calculation in $R(\fsp)\[t\]\ohat\Lambda$; our
basic ingredients are the results of Looijenga \cite{Looijenga}.

Let $\CM_g$ be the moduli stack of genus $g$ curves (that is, the homotopy
quotient of Teichm\"uller space by $\Gamma_g$), and for $n>0$, let
$\CM_g^n$ be the moduli stack of genus $g$ curves with a configuration of
$n$ ordered distinct points. Let $\m_g$ be the $\SS$-module
$\m_g(n)=H^\bull(\CM_g^n,\C)$, and $\m$ its stable version.

\begin{theorem}
$$
 \ch_t(\m) = \gamma_\infty \Exp \Bigl( \frac{h_1}{1-t^2}
\Bigr)
$$
\end{theorem}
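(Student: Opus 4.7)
The plan is to view $\CM_g^n$ as a fiber bundle over $\CM_g$ with fiber the ordered configuration space $F_n(\Sigma_g)$, and to apply the map $\Int$ (developed earlier in this section from Looijenga's theorem) to the resulting generating series. By reductivity of $\Sp$, the Leray--Serre spectral sequence for this bundle stably reduces to
\begin{equation*}
\ch_t(\m) \;=\; \Int\!\left[\sum_{n \geq 0} \ch_n\bigl(H^\bull(F_n(\Sigma))\bigr)\right],
\end{equation*}
an identity in $\Lambda\[t\]$ whose input lies in $\Lambda \ohat R(\fsp)\[t\]$ and carries both the $\SS_n$-action permuting marked points and the $\Sp$-action on surface cohomology.

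Next, I would compute the generating series $\Phi(t) := \sum_n \ch_n(H^\bull(F_n(\Sigma)))$ via the Cohen--Taylor/Totaro model of configuration spaces of an orientable surface. Stably the $\Sp$-character of $H^\bull(\Sigma)$ is $\<0\> - \<1\> t + \<0\> t^2$, so the contribution from $\Sigma^n$ (before imposing distinctness) is $\Exp\bigl(h_1(\<0\> - \<1\> t + \<0\> t^2)\bigr)$, corrected by the degree-$1$ Arnold diagonal classes. Applying $\Int$ using the Looijenga formulas (in particular $\Int[\<0\>] = \gamma_\infty$), a plethystic manipulation should collapse the expression to $\gamma_\infty \Exp(h_1/(1-t^2))$.

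The main obstacle is this plethystic collapse: contributions of all non-trivial $\Sp$-isotypes must cancel against the Arnold-class corrections under $\Int$, leaving only the $\<0\>$-isotype contribution times the clean psi-class factor $\Exp(h_1/(1-t^2))$. This is geometrically reasonable, since stably one expects $H^\bull(\CM_\infty^n) \cong H^\bull(\CM_\infty) \otimes \Q[\psi_1,\dots,\psi_n]$ with $\SS_n$ permuting the degree-$2$ psi-classes, and $\Exp(h_1/(1-t^2))$ is precisely the Frobenius characteristic of $\bigoplus_n \Q[\psi_1,\dots,\psi_n]$ by Cauchy's identity; but verifying this cancellation directly from the Cohen--Taylor/Totaro input using Looijenga's explicit formula for $\Int$ requires a careful plethystic computation.
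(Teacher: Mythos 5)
Your proposal leaves its key step unproved, and the tool you reach for to close it is not available at this point in the paper — in fact it is derived \emph{from} this theorem. Concretely: when this theorem is proved, the only thing known about $\Int$ is its definition $\Int[s_{\<\lambda\>}]=\lim_g\sum_i(-t)^iH^i(\Gamma_g,\Schur^{\<\lambda\>}(\HH))$ and the fact (Looijenga) that the limit exists; the explicit formula for $\Int[\Exp(\HH h_1)]$, hence for $\Int$ on the nontrivial isotypes $\<1\>,\<1^2\>,\dots$ appearing in the Cohen--Taylor/Totaro model, is only obtained at the end of the section by \emph{combining} the present theorem with the computation of $\ch_t(\c)$ for the fibred powers $\CC_g^n$ of the universal curve. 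So "applying $\Int$ using the Looijenga formulas" to force the cancellation of the non-trivial $\Sp$-isotypes against the Arnold diagonal classes is circular, and without it your "plethystic collapse" is exactly the unverified content of the theorem. (Your route is essentially the paper's third theorem of this section run backwards: the open embedding $\CM_g^n\hookrightarrow\CC_g^n$ as the complement of the diagonals is the geometric content of the configuration-space model relative to $\CM_g$, and the paper uses the known $\ch_t(\m)$ to solve for $\Int$, not the other way around.)

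The repair is much shorter than what you propose: the isomorphism you call "geometrically reasonable," namely $H^\bull(\CM_g^n,\C)\cong H^\bull(\CM_g,\C)\o\C[u_1,\dots,u_n]$ in the stable range with $u_i$ the degree-$2$ psi-classes, is precisely Proposition 2.2 of Looijenga and is the citable input, not the conclusion. Given it, the theorem is immediate: $\C[u_1,\dots,u_n]=\1(n)\circ\C[u]$ as $\SS_n$-modules, so its characteristic is $h_n\circ\bigl(h_1/(1-t^2)\bigr)$, and summing over $n$ gives $\Exp\bigl(h_1/(1-t^2)\bigr)$ times $\gamma_\infty$ from the $H^\bull(\CM_g)$ factor. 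Your last sentence correctly identifies this Cauchy-type identity; you should promote it from a plausibility check to the proof and discard the Leray--Serre/$\Int$ machinery here.
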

\begin{proof}
In \cite[Prop.2.2]{Looijenga} Looijenga proves that in degree less than $g$,
$$
H^\bull(\CM_g^n,\C) \cong H^\bull(\CM_g,\C) \o \C[u_1,\dots,u_n] ,
$$
where the classes $u_i$ of degree $2$ are the Chern classes of the line 
bundles whose
fibre at $[\Sigma,z_1,\dots,z_n]$ is $T_{z_i}^*\Sigma$. If $\1(n)$ is the
trivial $\SS_n$-module and $\C[u]$ is the $\SS$-module such that
$\C[u](1)=\C[u]$ and $\C[u](n)=0$ for $n\ne1$, we have 
$$
\1(n) \circ \C[u] \cong \C[u_1,\dots,u_n] .
$$
Taking the characteristic of both sides, we see that
$$
\ch_t(\C[u_1,\dots,u_n]) = h_n \circ \frac{h_1}{1-t^2} .
$$
Summing over $n$ and using Equation \eqref{eq.Exp} concludes the proof.
\end{proof}
Let $\CC_g^n$ be the $n$th fibred product of the universal curve
$\CM_g^1\to\CM_g$. There is an open embedding
$\CM_g^n\hookrightarrow\CC_g^n$, whose image is the complement of a divisor
with normal crossings. Let $\c_g$ be the $\SS$-module
$\c_g(n)=H^\bull(\CC_g^n,\C)$, and $\c$ its stable version.

\begin{theorem}
$$
\ch_t(\c) = \Int\bigl[\Exp\bigl( (1-t\HH+t^2)h_1
\bigr)\bigr]
$$
\end{theorem}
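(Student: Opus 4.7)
The plan is to compute $\ch_t(\c)$ stably via the Leray spectral sequence of the projection $p: \CC_g^n \to \CM_g$ (whose fibre is $\Sigma^n$), combined with a plethystic identity. The $E_2$-page reads $E_2^{p,q} = H^p(\Gamma_g; H^q(\Sigma^n))$, and the monodromy of $\Gamma_g$ on $H^\bullet(\Sigma^n)$ factors through $\Sp(\HH)$. Since only the alternating sum $\sum_i (-t)^i$ enters $\ch_t$, the Euler-characteristic quantity is invariant along the spectral sequence, so no degeneration argument is required. Decomposing the $\Sp$-module $H^\bullet(\Sigma^n)$ into isotypic components $\<\lambda\>$ with $\SS_n$-equivariant multiplicity spaces and passing to the stable limit $g \to \infty$, each factor $H^\bullet(\Gamma_g; \Schur^{\<\lambda\>}(\HH))$ is replaced by $\Int[s_{\<\lambda\>}]$, yielding
$$
\ch_t(\c)(n) = \Int\bigl[\ch_t\bigl(H^\bullet(\Sigma^n)\bigr)\bigr]
$$
where the inner $\ch_t$ sits in $R(\fsp)\[t\]\ohat\Lambda$ and records the joint $\Sp\times\SS_n$-character, and $\Int$ is applied in the $\Sp$-variable.

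Summing over $n$ and invoking the graded Künneth isomorphism $H^\bullet(\Sigma^n) \cong H^\bullet(\Sigma)^{\otimes n}$ (as $\Sp\times\SS_n$-modules, with Koszul signs in the super $\SS_n$-action), the problem reduces to the plethystic identity
$$
\sum_{n \geq 0} \ch_t\bigl(V^{\otimes n}\bigr) = \Exp(v \cdot h_1)
$$
in $R(\fsp)\[t\]\ohat\Lambda$, valid for any graded virtual $\Sp$-module $V$ with super character $v = \ch_t(V)$. I would prove this by a direct power-sum expansion: the super trace of $\sigma \in \SS_n$ of cycle type $\lambda$ on $V^{\otimes n}$ works out to $\prod_k p_k(v)^{m_k(\lambda)}$, where $p_k(v)$ uses the $\lambda$-ring structure on $R(\fsp)\[t\]$ (with $p_k(t) = t^k$, $p_k(-1) = -1$, and $p_k(\HH)$ the $k$-th Adams operation on $\HH = \<1\>$), matched against $\Exp(vh_1) = \exp\bigl(\sum_k p_k(v)\,p_k/k\bigr)$. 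Applied to $V = H^\bullet(\Sigma)$ with super character $v = 1 - t\HH + t^2$ (the minus sign on $\HH$ inherited from the $(-t)^1$ weighting of the odd class $H^1$), this gives $\sum_n \ch_t(H^\bullet(\Sigma^n)) = \Exp\bigl((1 - t\HH + t^2)h_1\bigr)$, and the theorem follows by applying $\Int$.

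The main obstacle is getting the sign bookkeeping right in the plethystic identity: one must verify that the Koszul signs in the super $\SS_n$-action on $H^\bullet(\Sigma)^{\otimes n}$ (a transposition permuting two copies of the odd class $H^1$ acts by $-1$) are precisely absorbed by the $(-t)^i$ weighting in the definition of $\ch_t$, so that the formula takes the clean plethystic form above without an extra $\omega$-twist on the odd part of $v$. Concretely, this amounts to checking on a single $k$-cycle that the super trace on $H^\bullet(\Sigma)^{\otimes k}$ equals $p_k(1 - t\HH + t^2)$; the nontrivial piece is the identity $p_k(-t\HH) = -t^k p_k(\HH)$, which relies on the convention $p_k(-1)=-1$ for the constant $-1$ in the ambient $\lambda$-ring. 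Once this short check is in place, the theorem drops out of the Leray descent.
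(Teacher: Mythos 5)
Your overall route is the same as the paper's: run the Leray--Serre spectral sequence of $\pi:\CC_g^n\to\CM_g$, Künneth the fibre cohomology $H^\bull(\Sigma)^{\o n}$ into $\Sp\times\SS_n$-isotypic pieces, use Looijenga's stability to replace $H^\bull(\Gamma_g,\Schur^{\<\lambda\>}(\HH))$ by $\Int[s_{\<\lambda\>}]$, and package the sum over $n$ as $\Exp\bigl((1-t\HH+t^2)h_1\bigr)$. Your plethystic identity and the sign check on a $k$-cycle (that the Koszul sign $(-1)^{k-1}$ on $(\HH[-1])^{\o k}$ combines with the $(-t)^k$ weight to give $p_k(-t\HH)$) are correct and are exactly the content of the paper's decomposition of $R^q\pi_*\C$ into $\Ind^{\SS_n}_{\SS_j\times\SS_k\times\SS_\ell}\bigl(\C^{\o j}\o\HH[-1]^{\o k}\o\C[-2]^{\o\ell}\bigr)$ and the resulting factor $\Int[\sigma_k(-t\HH h_1)]$.

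There is, however, one genuine error: the claim that ``no degeneration argument is required'' because only the alternating sum $\sum_i(-t)^i$ enters $\ch_t$. The quantity preserved along the pages of a spectral sequence is the Euler characteristic at $t=1$; for general $t$ a differential $d_r$ cancels a pair of classes in adjacent total degrees $i$ and $i+1$, changing the weighted sum by $(-t)^i+(-t)^{i+1}=(-t)^i(1-t)\ne 0$. Since $\ch_t(\c)$ records each cohomological degree separately (it is a polynomial in $t$, not a number), your substitute for degeneration fails, and with it the identification of $\sum_i(-t)^i\ch_n(H^i(\CC_g^n))$ with the $E_2$-page computation. The fix is exactly what the paper invokes: $\pi$ is a smooth projective morphism, so by Deligne's theorem the Leray--Serre spectral sequence collapses at $E_2$, i.e.\ $H^i(\CC_g^n,\C)\cong\bigoplus_{p+q=i}H^p(\CM_g,R^q\pi_*\C)$. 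With that one sentence restored, the rest of your argument goes through.
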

\begin{proof}
Since $\pi$ is a smooth projective morphism, the Leray-Serre spectral
sequence for the projection $\pi:\CC_g^n\to\CM_g$ collapses at its
$E_2$-term $H^p(\CM_g,R^q\pi_*\C)$. Observe that
$$
R^q\pi_*\C \cong \bigoplus_{\substack{j+k+\ell=n\\k+2\ell=q}}
\Ind^{\SS_n}_{\SS_j\times\SS_k\times\SS_\ell} \bigl( \C^{\o j} \o
\HH[-1]^{\o k} \o \C[-2]^{\o\ell} \bigr) ,
$$
and hence that
$$
H^i(\CC_g^n,\C) \cong \bigoplus_{p=0}^i \bigoplus_{\substack{j+k+\ell=n \\
k+2\ell=q}} \Ind^{\SS_n}_{\SS_j\times\SS_k\times\SS_\ell} \bigl( \C^{\o j}
\o H^p(\CM_g,\HH[-1]^{\o k}) \o \C[-2]^{\o\ell} \bigr) .
$$
By the main theorem of Looijenga \cite{Looijenga}, the right-hand side
stabilizes for $p<g/2+k$. Taking characteristics, we obtain
\begin{align*}
\sum_{i=0}^\infty (-t)^i \ch_n(H^i(\CC_g^n,\C)) &= \sum_{j+k+\ell=n}
\sigma_j(p_1) \cdot \Int\bigl[\sigma_k(-t\HH h_1)\bigr] \cdot
\sigma_\ell(t^2h_1) \\ &= \Int\bigl[\sigma_k((1-t\HH+t^2)h_1)\bigr] .
\qed\end{align*} \def\qed{}
\end{proof}

\begin{theorem}
$\ch_t(\c) = \ch_t(\m) \circ t^{-2} \bigl(\Exp(t^2h_1)-1\bigr)$
\end{theorem}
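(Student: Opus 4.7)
The plan is to read the identity as the plethystic shadow of an isomorphism of $\SS$-modules $\c \cong \m \circ \mathcal{F}$, where $\mathcal{F}$ is the ``collision blob'' $\SS$-module defined by $\mathcal{F}(0)=0$ and, for $m\ge 1$, $\mathcal{F}(m)=\C$ placed in cohomological degree $2(m-1)$ with trivial $\SS_m$-action. Its characteristic is immediately
\begin{equation*}
\ch_t(\mathcal{F}) \;=\; \sum_{m\ge 1} h_m\,t^{2(m-1)} \;=\; t^{-2}\bigl(\Exp(t^2 h_1)-1\bigr),
\end{equation*}
which is exactly the plethystic factor appearing on the right-hand side of the identity.

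The geometric input is the stratification of $\CC_g^n$ by coincidence patterns. For each set partition $\pi$ of $[n]$, the locally closed stratum $\CC_g^\pi = \{(C;p_1,\dots,p_n) : p_i=p_j \iff i\sim_\pi j\}$ is isomorphic to $\CM_g^{|\pi|}$ via the map that collapses each $\pi$-block to its common marked point, and has complex codimension $n-|\pi|$ in $\CC_g^n$; the closures of these strata are the normal-crossings divisor mentioned at the start of the section. The group $\SS_n$ permutes the $\pi$'s; the stabilizer of a $\pi$ with $n_m$ blocks of size $m$ is $\prod_m \SS_m\wr\SS_{n_m}$, with the block-internal $\SS_m$'s fixing $\CC_g^\pi$ pointwise and the $\SS_{n_m}$'s permuting the marked points of $\CM_g^{|\pi|}$ that come from blocks of the same size.

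The hard part will be the cohomological step: showing that, stably in $g$, the iterated Gysin--Thom triangles for this stratification split $\SS_n$-equivariantly to give
\begin{equation*}
H^\bull(\CC_g^n) \;\cong\; \bigoplus_{\pi\,\vdash\,[n]} H^{\bull - 2(n-|\pi|)}\bigl(\CM_g^{|\pi|}\bigr).
\end{equation*}
Two observations force the splitting. First, the stable cohomology $H^\bull(\CM_g^k)$ is concentrated in even degrees (immediate from $\ch_t(\m)=\gamma_\infty\Exp(h_1/(1-t^2))\in\Lambda\[t^2\]$), so all connecting maps of the Gysin sequences vanish by parity. Second, each Thom pushforward is $\SS_n$-equivariant with trivial character on the block-internal $\SS_m$ factors, since those act trivially on $\CC_g^\pi$ and therefore also on the Euler classes of the normal bundles. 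Reassembling the pieces by $\SS_n$-orbit of $\pi$ and recognizing the induction-from-stabilizer pattern as the definition of composition of $\SS$-modules identifies the right-hand side with $(\m\circ\mathcal{F})(n)$.

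The conclusion is then formal. Applying $\ch_t$ and invoking the compatibility $\ch_t(\v\circ\w)=\ch_t(\v)\circ\ch_t(\w)$ from Section~\ref{sec.symmetric} gives
\begin{equation*}
\ch_t(\c)\;=\;\ch_t(\m)\circ\ch_t(\mathcal{F})\;=\;\ch_t(\m)\circ t^{-2}\bigl(\Exp(t^2 h_1)-1\bigr),
\end{equation*}
as required. As an independent consistency check, substituting $\ch_t(\m)=\gamma_\infty\Exp(h_1/(1-t^2))$ rewrites the right-hand side as $\gamma_\infty\Exp\bigl(t^{-2}(\Exp(t^2 h_1)-1)/(1-t^2)\bigr)$, which must agree with the formula $\Int[\Exp((1-t\HH+t^2)h_1)]$ from the previous theorem; matching the two directly reduces to unpacking Looijenga's explicit expression for $\Int[\Exp(-t\HH h_1)]$ in terms of $\gamma_\infty$, and offers a purely algebraic alternative should the Gysin-degeneration argument require reinforcement.
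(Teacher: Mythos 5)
The paper gives no proof of this statement --- the proof environment in the source reads only ``Fill in \dots Easy, given results in Looijenga'' --- so your proposal is supplying an argument the draft simply omits, and it cannot be measured against an in-paper argument. Your overall strategy is the right one: the identity really is the plethystic shadow of $\c\cong\m\circ\mathcal{F}$ for the ``collision'' $\SS$-module $\mathcal{F}$ with $\ch_t(\mathcal F)=\sum_{m\ge1}t^{2(m-1)}h_m=t^{-2}(\Exp(t^2h_1)-1)$, the codimension count $n-|\pi|$ is correct, and the observation that the block-internal $\SS_m$ acts trivially on the Thom class (because the normal bundle $T\Sigma\o(\C^m/\C)$ is complex, so the action preserves the orientation and hence the Euler class) is exactly what is needed to match the induction-from-stabilizer pattern with the definition of $\circ$. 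A direct check for $n=2$ in degree $2$, including the $\SS_2$-action, confirms the decomposition $H^2(\CC_g^2)\cong H^2(\CM_g^2)\oplus H^0(\CM_g^1)=3\,\mathrm{triv}+\mathrm{sgn}$ against the Leray computation.

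There are two points you should repair. First, and more seriously, your splitting of the Gysin/stratification spectral sequence rests on the stable cohomology of $\CM_g^k$ being concentrated in even degrees. That evenness is equivalent to $H^{\mathrm{odd}}(\Gamma_\infty;\Q)=0$, which is a consequence of Mumford's conjecture (Madsen--Weiss) and not of the unconditional stability results of Harer--Looijenga that this section is otherwise built on; Looijenga proves the present theorem without that input. As written, your argument therefore makes an unconditional statement conditional. Either flag the dependence explicitly, or replace the parity argument by one that works unconditionally (e.g.\ Looijenga's comparison of the Leray spectral sequences of $\CC_g^n\to\CM_g$ and $\CM_g^n\to\CM_g$, or a weight argument on compactly supported cohomology). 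Second, a smaller technical point: the complement of $\CM_g^n$ in $\CC_g^n$ is only a normal-crossings divisor, not a smooth subvariety, so ``iterated Gysin--Thom triangles'' need to be organized as the spectral sequence of the stratification by the locally closed strata $\CC_g^\pi$ (equivalently, the weight filtration on $H_c^\bull$ followed by Poincar\'e duality for the smooth strata); the conclusion is the same, but the one-step Gysin triangle does not literally apply to the non-smooth union of diagonals.
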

\begin{proof}
Fill in ... Easy, given results in Looijenga.
\end{proof}

\begin{corollary}
$\displaystyle  \ch_t(\c) = \gamma_\infty \Exp
\frac{1}{t^2} \left( \frac{\Exp(t^2h_1)-1}{1-t^2} \right)$
\end{corollary}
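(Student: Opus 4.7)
The plan is to substitute the formula $\ch_t(\m)=\gamma_\infty\,\Exp(h_1/(1-t^2))$ from the earlier theorem directly into the identity $\ch_t(\c)=\ch_t(\m)\circ f$ of the preceding theorem, with $f=t^{-2}(\Exp(t^2h_1)-1)$, and simplify.

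I would invoke three standard properties of plethysm on the $\lambda$-ring $\Lambda\[t\]$: (a) plethysm is a ring endomorphism in the left-hand argument, $(ab)\circ f=(a\circ f)(b\circ f)$; (b) pure $t$-polynomials (containing no power sums) act as scalars on the left, $c(t)\circ f=c(t)$, which is consistent with the rule $p_n\circ t=t^n$ that governs only how $t$ transforms when it appears on the right of $\circ$; and (c) $\Exp$ commutes with plethysm on the left, $\Exp(g)\circ f=\Exp(g\circ f)$, which follows from $\Exp(g)=\sum_n h_n\circ g$ together with the associativity of plethysm.

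Applying (a)--(c) in order,
\begin{align*}
\ch_t(\c)
 &= \bigl(\gamma_\infty\,\Exp(h_1/(1-t^2))\bigr)\circ f \\
 &= (\gamma_\infty\circ f)\cdot\bigl(\Exp(h_1/(1-t^2))\circ f\bigr) \\
 &= \gamma_\infty\cdot\Exp\bigl((h_1/(1-t^2))\circ f\bigr) \\
 &= \gamma_\infty\cdot\Exp\bigl(f/(1-t^2)\bigr),
\end{align*}
where the last equality uses (b) to factor out $1/(1-t^2)$ and the fact that $h_1=p_1$ is the plethystic unit on the right, so $h_1\circ f=f$. Substituting the definition of $f$ then yields the stated formula.

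There is no real obstacle here: once the preceding theorem and the $\lambda$-ring conventions on $\Lambda\[t\]$ are in hand, the corollary reduces to a short plethystic manipulation. The one point worth flagging is the asymmetric role of $t$ under $\circ$: on the right it is a $\lambda$-variable with $p_n\circ t=t^n$, while on the left it is a scalar for the purpose of substituting into the power-sum variables of $\Lambda$. This convention is the one forced by the graded-$\SS$-module interpretation of plethysm in Section~\ref{sec.symmetric}, and can be checked on simple examples such as $\v\circ\w$ for $\v,\w$ supported in a single $\SS_n$-slot.
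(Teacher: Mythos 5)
Your derivation is correct and is exactly the route the paper intends: the corollary is stated without proof as an immediate consequence of the two preceding theorems, obtained by substituting $\ch_t(\m)=\gamma_\infty\,\Exp\bigl(h_1/(1-t^2)\bigr)$ into $\ch_t(\c)=\ch_t(\m)\circ t^{-2}\bigl(\Exp(t^2h_1)-1\bigr)$ and simplifying with the standard plethysm rules on $\Lambda\[t\]$. Your explicit flagging of the convention that pure $t$-series are inert on the left of $\circ$ (so $\gamma_\infty\circ f=\gamma_\infty$ and $(1-t^2)^{-1}$ factors out) is the only point requiring care, and you handle it correctly.
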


Combining all of these results, we obtain a formula for the operation
$\Int$.
\begin{theorem}
$$
\Int\bigl[\Exp\bigl( \HH h_1 \bigr)\bigr] = \frac{\gamma_\infty}{\Exp\bigl(
(1-t^2)^{-1} \bigr)} \Exp \frac{1}{t^2} \left( \frac{\Exp(-th_1)}{1-t^2} -
1 + (t+t^3)e_1 \right)
$$
\end{theorem}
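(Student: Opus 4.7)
The plan is to combine the preceding Theorem $\ch_t(\c) = \Int\bigl[\Exp((1-t\HH+t^2)h_1)\bigr]$ with the Corollary giving $\ch_t(\c)$ in closed form, and then to isolate $\Int\bigl[\Exp(\HH h_1)\bigr]$ via a plethystic substitution converting $-t\HH h_1$ into $\HH h_1$. The preparatory step is to factor
$$\Exp((1-t\HH+t^2)h_1) = \Exp((1+t^2)h_1)\cdot\Exp(-t\HH h_1)$$
via $\Exp(a+b)=\Exp(a)\Exp(b)$; since the first factor lies in $\Lambda\[t\]$, it pulls through the $\Lambda\[t\]$-linear operator $\Int$, and the Corollary then gives the closed form
$$\Int[\Exp(-t\HH h_1)] = \frac{\gamma_\infty}{\Exp((1+t^2)h_1)}\,\Exp\frac{1}{t^2}\!\left(\frac{\Exp(t^2h_1)-1}{1-t^2}\right).$$

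The central step is the choice of substitution. Let $\sigma$ be the ring endomorphism of $\Lambda\[t,t^{-1}\]$ (acting on the $\Lambda$-factor of $R(\fsp)\[t\]\ohat\Lambda$) defined by $\sigma(p_k) = -t^k p_k$ and $\sigma(t)=t$; its inverse is $\sigma^{-1}(p_k) = -p_k/t^k$. A direct check on generators, using $\psi^\ell(-1) = -1$, shows that $\sigma^{-1}$ commutes with every Adams operation $\psi^\ell$, hence with $\Exp$. Since $\sigma^{-1}$ leaves $\HH$ untouched and $\Int$ acts only on the $R(\fsp)$-factor, $\sigma^{-1}$ commutes with $\Int$; and because $\sigma^{-1}(-th_1) = h_1$ we have $\sigma^{-1}(\Exp(-t\HH h_1)) = \Exp(\HH h_1)$, so
$$\Int[\Exp(\HH h_1)] = \sigma^{-1}\bigl(\Int[\Exp(-t\HH h_1)]\bigr).$$
The evaluations $\sigma^{-1}(\Exp(t^2 h_1)) = \Exp(-th_1)$ and $\sigma^{-1}(\Exp((1+t^2)h_1)) = \Exp(-h_1/t)\Exp(-th_1)$ then transform the closed form into
$$\Int[\Exp(\HH h_1)] = \frac{\gamma_\infty}{\Exp(-h_1/t)\Exp(-th_1)}\,\Exp\frac{1}{t^2}\!\left(\frac{\Exp(-th_1)-1}{1-t^2}\right).$$

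The final step is an elementary identity reconciling this with the stated form. Subtracting the arguments of the two outer $\Exp\tfrac{1}{t^2}(\,\cdot\,)$'s (using $e_1 = h_1$ and $1-(1-t^2)=t^2$) yields the expression $\tfrac{1}{1-t^2} + (t^{-1}+t)h_1$; applying $\Exp$ and then invoking $\Exp(a+b)=\Exp(a)\Exp(b)$ together with $\Exp(-a)\Exp(a)=1$ converts the prefactor $\bigl(\Exp(-h_1/t)\Exp(-th_1)\bigr)^{-1}$ into $\Exp(1/(1-t^2))^{-1}$, recovering the stated formula. The only genuine subtlety in the argument is the choice of substitution: the naive literal substitution $y_i \mapsto y_i/(-t)$ corresponds to $p_k \mapsto (-1)^k p_k/t^k$, which fails to commute with the Adams operations (and hence with $\Exp$); only the $\lambda$-ring substitution $\sigma^{-1}$ constructed above produces the correct answer.
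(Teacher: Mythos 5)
Your proof is correct and follows essentially the same route as the paper's: divide the formula for $\Int\bigl[\Exp((1-t\HH+t^2)h_1)\bigr]$ by $\Exp\bigl((1+t^2)h_1\bigr)$, perform the plethystic substitution $h_1\mapsto -h_1/t$, and reconcile the two $\Exp$ arguments via $\Exp(a+b)=\Exp(a)\Exp(b)$. Your explicit check that the correct substitution is the $\lambda$-ring endomorphism $p_k\mapsto -p_k/t^k$ (commuting with the Adams operations), rather than the naive $p_k\mapsto(-1)^kp_k/t^k$, simply makes precise what the paper leaves implicit in the phrase ``replacing $h_1$ by $-h_1/t$.''
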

\begin{proof}
The above theorems imply that
$$
\Int\bigl[\Exp\bigl( (1-t\HH+t^2) h_1 \bigr)\bigr] = \gamma_\infty \Exp
\frac{1}{t^2} \left( \frac{\Exp(t^2h_1)-1}{1-t^2} \right) .
$$
Multiplying both sides by $\Exp\bigl(-(1+t^2)h_1\bigr)$ gives
$$
\Int\bigl[\Exp\bigl( - t \HH h_1 \bigr)\bigr] = \gamma_\infty \Exp
\frac{1}{t^2} \left( \frac{\Exp(t^2h_1)-1}{1-t^2} - t^2(1+t^2)h_1 \right) .
$$
Replacing $h_1$ by $-h_1/t$, we obtain
$$
\Int\bigl[\Exp\bigl( \HH h_1 \bigr)\bigr] = \gamma_\infty \Exp
\frac{1}{t^2} \left( \frac{\Exp(-th_1)-1}{1-t^2} + t(1+t^2)h_1 \right) ,
$$
and the result follows.
\end{proof}

\begin{corollary}
\label{cor.slambda}
\begin{align*}
\sum_\lambda \Int\bigl[s_{\<\lambda\>}\bigr] s_\lambda &=
\Int\bigl[\exp(-\DD)\Exp\bigl( \HH h_1 \bigr)\bigr] \\ &=
\frac{\gamma_\infty}{\Exp\bigl( (1-t^2)^{-1} \bigr)} \Exp \frac{1}{t^2}
\left( \frac{\Exp(-th_1)}{1-t^2} - 1 + (t+t^3)e_1 - t^2e_2 \right) \\&=
\frac{\gamma_\infty}{\Exp\bigl( (1-t^2)^{-1} \bigr)} \tomega
\Exp \frac{1}{t^2}
\left( \frac{\Exp(th_1)}{1-t^2} - 1 - (t+t^3)h_1 - t^2h_2 \right) \\ &=
\frac{\gamma_\infty}{\Exp\bigl( (1-t^2)^{-1} \bigr)} \tomega
\Exp\left(\ch_t \v\right).
\end{align*}
\end{corollary}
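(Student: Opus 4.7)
The plan is to verify the four asserted equalities in sequence, combining Cauchy's identity, the conversion formula \eqref{eq.stosp} between Schur and symplectic Schur functions, the previous theorem for $\Int[\Exp(\HH h_1)]$, and elementary properties of the involution $\tomega$.

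First I would establish the initial equality by combining Cauchy's identity \eqref{Cauchy}, which expresses $\Exp(\HH h_1) = \sum_\lambda s_\lambda(\HH) \otimes s_\lambda$ in $R(\fsp)\[t\] \ohat \Lambda$, with \eqref{eq.stosp} in the form $s_{\<\lambda\>} = \exp(-\DD) s_\lambda$. Since $\DD$ acts on the $R(\fsp) \cong \Lambda$ factor and is linear, this yields $\exp(-\DD) \Exp(\HH h_1) = \sum_\lambda s_{\<\lambda\>}(\HH) \otimes s_\lambda$, and applying the $R(\fsp)$-linear map $\Int$ produces the desired sum $\sum_\lambda \Int[s_{\<\lambda\>}] s_\lambda$.

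For the second equality, the heart of the argument is to compute $\exp(-\DD) \Exp(\HH h_1)$ explicitly. I would exploit the fact that $\Exp(\HH h_1) = \exp\bigl( \sum_n H_n p_n / n \bigr)$, where $H_n$ are the power sums on the $R(\fsp)$ side, is an eigenfunction of the second-order operator $\DD$ in the $H_n$'s, with (constant) eigenvalue $\sum_n (p_n^2 - p_{2n})/(2n) = \log \Exp(e_2)$; the last identity is essentially the content of the proposition computing $\Exp(e_2)^\perp$. Consequently $\exp(-\DD) \Exp(\HH h_1) = \Exp(-e_2)\Exp(\HH h_1) = \Exp(\HH h_1 - e_2)$. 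Since the factor $\Exp(-e_2)$ is $\HH$-free, it commutes out of $\Int$, and combining with the previous theorem via $\Exp(a+b) = \Exp(a) \Exp(b)$ then collapses cleanly to the second line. This eigenfunction computation is the main substantive step.

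The third equality is an application of $\tomega$. Since $\tomega$ is a ring homomorphism negating every $p_n$ and plethysm $p_n \circ (-)$ is also a ring homomorphism, checking on the generators $p_k$ gives $\tomega(p_n \circ p_k) = -p_{nk} = p_n \circ (\tomega p_k)$, so that $\tomega$ commutes with $p_n \circ (-)$; hence $\tomega \Exp(g) = \Exp(\tomega g)$ for every $g$. Using $\tomega(h_1) = -h_1 = -e_1$, $\tomega(h_2) = e_2$, and $\tomega \Exp(t h_1) = \Exp(-t h_1)$, applying $\tomega$ inside the outer $\Exp$ on the third line transforms its argument exactly into the argument of the outer $\Exp$ on the second line. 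Finally, the fourth equality is immediate from the explicit formula for $\ch_t(\v)$ given in Theorem~\ref{thm.A}. Beyond the eigenvalue calculation, everything is bookkeeping with $\Exp(a+b) = \Exp(a) \Exp(b)$ and the elementary action of $\tomega$ on $h_1$, $h_2$, and $H(t) = \Exp(th_1)$.
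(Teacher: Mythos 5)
Your proposal is correct and uses exactly the ingredients the paper sets up for this purpose (Cauchy's formula, equation \eqref{eq.stosp}, the proposition $\Exp(e_2)^\perp=\exp(\DD)$ giving $\exp(-\DD)\Exp(\HH h_1)=\Exp(-e_2)\Exp(\HH h_1)$, the preceding theorem for $\Int[\Exp(\HH h_1)]$, and the compatibility of $\tomega$ with plethysm); the paper leaves the corollary without an explicit proof, and your argument is the intended one. The "eigenfunction" phrasing of the key step is just the reproducing-kernel property of $\Exp(\HH h_1)$ in disguise, so there is no genuine divergence from the paper's route.
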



\section{Graph cohomology and the $\mathfrak{sp}$-character 
of $\AA$ }
\label{sec.graph}

In this section, we will calculate the $\fsp$-character of $\AA$ using
the ideas of graph cohomology, which interprets classical invariant theory
of Lie groups in terms of graphs. For similar applications of graph cohomology
see~\cite{Ko,GN}. 

Let $\cch_t(\AA)=\sum_{k=0}^\infty (-t)^k \AA_k $ denote the
character of $\AA$ in $\Lambda\[t\]$.
Let $\b_{\tp}$ be the $\SS$-module given by
\begin{equation}
\label{eq.ab1}
\b_{\tp}(n)=(\AA^\bullet \o \TT^n_{\tp}(\HH))^{\fsp},
\end{equation}
where $\TT_{\tp}(\HH)$ is the quotient
of the algebra $\TT(\HH)$ modulo the two sided ideal generated by
$\omega_{\text{sympl}} \in \TT^2(\HH)$, where $\omega_{\text{sympl}}$
is the symplectic form on $\HH$. 

\begin{proposition}
\label{prop.b1}
$$ 
\ch_t(\b_{\tp})=\Exp \left(\ch_t(\v) \right),
$$
where $\ch_t(\v)$ is as in the statement of theorem \ref{thm.A}.
\end{proposition}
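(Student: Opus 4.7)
The plan is to realize $\b_{\tp}$ as a Kontsevich-style graph complex \cite{Ko, GN}, use the decomposition of graphs into connected components to produce the plethystic exponential, and match the connected piece against the explicit formula for $\ch_t(\v)$.

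First I would apply the first fundamental theorem of invariant theory for $\fsp$ to identify $(\HH^{\otimes m})^{\fsp}$ with the linear span of perfect matchings on $m$ half-edges, modulo the standard second-fundamental-theorem relations. Viewing $\AA^k$ as a subquotient of $\HH^{\otimes 3k}$ and $\TT^n_{\tp}(\HH)$ as a quotient of $\HH^{\otimes n}$, the invariants $(\AA^k \otimes \TT^n_{\tp}(\HH))^{\fsp}$ become the vector space of formal graphs with $k$ trivalent internal vertices (one from each copy of $\<1^3\>$), $n$ labelled univalent external legs (from $\TT^n$), and edges given by pair contractions. Three ingredients are crucial here: (a) the antisymmetry from $\Wedge^k(\<1^3\>)$ together with the quotient by the $\<2^2\>$ ideal encodes exactly the vertex orientation and the IHX/Jacobi relation of graph cohomology; (b) the quotient $\TT^n_{\tp}$ by the symplectic-form ideal forbids any edge between two external legs, because such an edge amounts to inserting $\omega_{\text{sympl}}$ into the $\TT^n$ slot; (c) connected components of the resulting graphs carry disjoint subsets of the labelled legs.

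Next, since every graph decomposes uniquely as a disjoint union of its connected components, this realization exhibits $\b_{\tp}$ as the plethystic exponential of the $\SS$-module $\v$ of connected graphs, graded by the number of internal vertices, i.e.\ $\b_{\tp} \cong \exp(\v)$. Taking characteristics and using the plethysm formula of Section \ref{sec.symmetric} produces $\ch_t(\b_{\tp}) = \Exp(\ch_t(\v))$. Finally I would check that $\ch_t(\v)$ obtained by directly enumerating connected trivalent graphs agrees with the displayed closed form. Using the Euler-characteristic identity $V = 2g - 2 + n$ between vertices, legs, and first Betti number, and summing a geometric series in $g$ with weight $t^{2g}$, one gets a leading term $\frac{\Exp(t h_1)}{t^2(1 - t^2)}$; the corrections $-t^{-2} - (t^{-1} + t) h_1 - h_2$ coming from the $-1 - (t + t^3)h_1 - t^2 h_2$ piece of the formula account for the degenerate small cases $(g,n) = (0,0), (0,1), (0,2), (1,1)$, which are either not genuine graphs, are killed by $\TT_{\tp}$ as bare symplectic-form insertions, or vanish via the self-loop automorphism sign.

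The main obstacle is the faithful identification in step one of the $\fsp$-module quotient by $\<2^2\>$ with the IHX relation on trivalent graphs. This requires unpacking the decomposition $\Wedge^2(\<1^3\>) = \<1^6\> \oplus \<1^4\> \oplus \<1^2\> \oplus \<0\> \oplus \<2^2, 1^2\> \oplus \<2^2\>$ and verifying that the top $\<2^2\>$ summand is generated precisely by the Jacobi combination on two trivalent vertices sharing an edge, a computation essentially contained in \cite{Ko, GN}. The third step also requires careful tracking of orientation signs and graph automorphisms to produce the stated $h_n$ rather than more general Schur functions in the formula for $\ch_t(\v)$.
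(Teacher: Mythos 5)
Your outline is the paper's own route: symplectic invariant theory turns $(\AA\otimes\TT_{\tp}(\HH))^{\fsp}$ into an algebra of trivalent graphs with legs, the $\TT_{\tp}$ quotient forbids edges joining two legs (your point (b) is exactly right), and the decomposition into connected components yields the plethystic $\Exp$. The one place the sketch is genuinely thin is the step you describe as ``directly enumerating connected trivalent graphs'': for a fixed loop number $k$ and leg number $n$ there are many connected trivalent graphs, and a priori they span a large and nontrivial $\SS_n$-module, so neither the geometric series in $k$ nor the appearance of $h_n$ (rather than general Schur functions) comes from enumeration --- it is a normal-form theorem. The missing argument is that the $\IH$ relation lets you slide an edge carrying a leg past any vertex, so every connected $T$-graph with $n$ legs becomes, in the quotient, the single and manifestly $\SS_n$-invariant graph $e_{k,n}$; the legless input, namely that $\C\<\text{T-graphs-no legs}\>/(\IH,\lloop)$ is the free polynomial algebra $\Q[e_{2,0},e_{3,0},\dots]$ with exactly one generator in each relevant degree, is the result of \cite{GN} you must invoke rather than re-derive by counting. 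Only after that does each admissible pair $(k,n)$ contribute exactly one copy of the trivial representation in degree $2k-2+n$, at which point your bookkeeping of the degenerate cases $(0,0),(0,1),(0,2),(1,1)$ matches the correction terms in $\ch_t(\v)$ --- though note that $e_{1,1}$ is killed by the $\lloop$ relation, i.e.\ by the kernel of the projection $\Wedge^3\HH\to\<1^3\>$, not by an automorphism sign.
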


\begin{corollary}
\label{cor.b1}
\begin{equation}
\label{eq.ab2}
\cch_t(\AA)=\tomega\exp(-\DD')\ch_t(\b_{\tp}),
\end{equation}
which implies part of theorem \ref{thm.A}.
\end{corollary}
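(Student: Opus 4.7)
The plan is to decompose $\TT^n_{\tp}(\HH)$ as an $\SS_n \times \fsp$-module via symplectic Schur--Weyl duality, compute $\ch_t(\b_{\tp})$ in terms of the $\fsp$-decomposition of $\AA$, and rewrite the result in the stated $\tomega$ form using a conjugation identity on the heat operator.

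For the first step, I would use the Weyl decomposition \eqref{Weyl:symplectic}, which expresses $\HH^{\otimes n}$ as a direct sum of pieces indexed by pairs $(k, \lambda)$ with $|\lambda| = n - 2k$; the summands with $k \geq 1$ are precisely those obtained by contracting pairs of tensor factors against the symplectic form $\omega_{\text{sympl}}$. The two-sided ideal $(\omega_{\text{sympl}}) \subset \TT(\HH)$ cuts out exactly these lower pieces at each level, leaving only the top $k=0$ part. This gives the $\SS_n \times \fsp$-isomorphism $\TT^n_{\tp}(\HH) \cong \bigoplus_{|\lambda|=n} V_\lambda \otimes \<\lambda\>$, with Specht modules $V_\lambda$ appearing as the $\SS_n$-multiplicity spaces.

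For the second step, write $\AA^i = \bigoplus_\lambda a_{\lambda,i} \<\lambda\>$ as $\fsp$-modules, and use self-duality and orthogonality of the irreducibles $\<\lambda\>$ to compute $\b_{\tp}(n)_i = (\AA^i \otimes \TT^n_{\tp}(\HH))^{\fsp} \cong \bigoplus_{|\lambda|=n} a_{\lambda,i} V_\lambda$ as $\SS_n$-modules. Taking Frobenius characteristics and tracking the cohomological signs $(-t)^i$ gives $\ch_t(\b_{\tp}) = \sum_\lambda a_\lambda(t)\, s_\lambda$, where $a_\lambda(t) = \sum_i (-t)^i a_{\lambda,i}$, while $\cch_t(\AA) = \sum_\lambda a_\lambda(t)\, s_{\<\lambda\>}$ directly from the $\fsp$-decomposition. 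Applying the transition formula $s_{\<\lambda\>} = \exp(-\DD)\, s_\lambda$ from \eqref{eq.stosp} term-by-term then yields the core identity $\cch_t(\AA) = \exp(-\DD)\, \ch_t(\b_{\tp})$.

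To recast this in the form of the corollary, I would use the computation $\tomega\, \partial/\partial p_n\, \tomega = -\partial/\partial p_n$, which follows directly from $\tomega(p_n) = -p_n$ and gives the operator identity $\tomega\, \DD\, \tomega = \DD'$, hence $\exp(-\DD) = \tomega\, \exp(-\DD')\, \tomega$. This is the main obstacle: it remains to absorb the inner $\tomega$ acting on $\ch_t(\b_{\tp})$. I expect this to follow from a $\tomega$-symmetry of the specific plethystic form $\Exp(\ch_t(\v))$ supplied by Proposition \ref{prop.b1}, using the compatibility $\tomega\, \Exp(f) = \Exp(\tomega f)$ together with the explicit generating-function identities for $\ch_t(\v)$ established there.
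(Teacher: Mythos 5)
Your first two steps reproduce exactly the ingredients the paper's proof cites: the identification of $\TT^n_{\tp}(\HH)$ with the top ($k=0$) piece $\bigoplus_{|\lambda|=n}V_\lambda\o\Schur^{\<\lambda\>}(\HH)$ of \eqref{Weyl:symplectic}, the orthogonality computation of $(\AA^\bull\o\TT^n_{\tp}(\HH))^{\fsp}$, the transition formula \eqref{eq.stosp}, and the conjugation identity $\tomega\,\DD\,\tomega=\DD'$, i.e.\ $\exp(-\DD)\,\tomega=\tomega\exp(-\DD')$. The gap is in your last step. Having arrived at $\cch_t(\AA)=\exp(-\DD)\ch_t(\b_{\tp})=\tomega\exp(-\DD')\,\tomega\,\ch_t(\b_{\tp})$, you need $\tomega\,\ch_t(\b_{\tp})=\ch_t(\b_{\tp})$, and you propose to extract this from a $\tomega$-symmetry of $\Exp(\ch_t(\v))$. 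No such symmetry exists: $\tomega$ sends $h_n$ to $(-1)^n e_n$, and already the bidegree-$(t^1,\Lambda_3)$ part of $\Exp(\ch_t(\v))$ is the single term $t\,h_3=t\,s_3$ coming from $e_{0,3}$, which $\tomega$ sends to $-t\,s_{1^3}$. So the argument as written cannot be closed.

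The actual source of the $\tomega$ is your second step, which is where the error sits: because $\HH=H^1(\Sigma_g)$ lives in odd degree, the symmetric group acts on $\HH^{\o n}$ with Koszul signs, so the $\SS_n$-multiplicity spaces in the graded Schur--Weyl decomposition of $\TT^n_{\tp}(\HH)$ enter the Frobenius characteristic $\tomega$-twisted rather than as plain Schur functions. Writing $\AA^i=\bigoplus_\lambda a_{\lambda,i}\<\lambda\>$ and $a_\lambda(t)=\sum_i(-t)^i a_{\lambda,i}$, the correct intermediate identity is
\begin{equation*}
\tomega\,\ch_t(\b_{\tp})=\sum_\lambda a_\lambda(t)\,s_\lambda
\qquad\text{rather than}\qquad
\ch_t(\b_{\tp})=\sum_\lambda a_\lambda(t)\,s_\lambda ,
\end{equation*}
which is precisely how Section~\ref{sec.compute} restates the corollary. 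With the twist inserted there, \eqref{eq.stosp} gives $\cch_t(\AA)=\exp(-\DD)\,\tomega\,\ch_t(\b_{\tp})=\tomega\exp(-\DD')\ch_t(\b_{\tp})$ in one line, the spurious inner $\tomega$ never appears, and no invariance claim is needed.
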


\begin{proof}
It follows from the fact that
$$\TT_{\tp}(\HH)=\bigoplus_{\lambda} \Schur^{\<\lambda\>}(\HH)\o
\Schur^{\lambda}(\HH),
$$
as well as equation \eqref{eq.stosp} 
and $\exp(-\DD) \tomega =\tomega \exp(-\DD')$.
\end{proof}

\begin{corollary}
\label{cor.b2}
$$ \frac{\gamma_\infty}{\Exp((1-t^2)^{-1})} \cch_t(\AA)
=\sum_{\lambda}\Int \bigl[s_{\<\lambda\>}\bigr] s_{\<\lambda\>}
$$
\end{corollary}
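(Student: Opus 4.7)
The plan is to derive this corollary from Corollary \ref{cor.slambda} by applying the heat-kernel operator $\exp(-\DD)$ to both sides of the identity
$$\sum_\lambda \Int[s_{\<\lambda\>}] s_\lambda = \frac{\gamma_\infty}{\Exp((1-t^2)^{-1})} \tomega \Exp(\ch_t \v),$$
where $\DD$ acts on the ``$\Lambda$-factor'' (the Schur functions $s_\lambda$), not on the scalar $\Int[s_{\<\lambda\>}] \in \Z\[t\]$ in front. The entire argument is an assembly of identities established earlier in the excerpt; no new computation is really needed.

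On the left-hand side, the conversion formula \eqref{eq.stosp} says $\exp(-\DD) s_\lambda = s_{\<\lambda\>}$, so applying $\exp(-\DD)$ termwise yields exactly $\sum_\lambda \Int[s_{\<\lambda\>}] s_{\<\lambda\>}$, which is the right-hand side we are aiming for. On the right-hand side, the scalar factor $\gamma_\infty/\Exp((1-t^2)^{-1})$ lies in $\Z\[t\]$ and thus commutes with $\exp(-\DD)$, so we only need to compute $\exp(-\DD)\tomega\Exp(\ch_t\v)$. Using the commutation relation $\exp(-\DD)\tomega = \tomega\exp(-\DD')$, this becomes $\tomega\exp(-\DD')\Exp(\ch_t\v)$, which by Proposition \ref{prop.b1} equals $\tomega\exp(-\DD')\ch_t(\b_\tp)$, and by Corollary \ref{cor.b1} this in turn equals $\cch_t(\AA)$. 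Putting the two sides together gives the desired identity.

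The one substantive step to verify is the commutation $\exp(-\DD)\tomega = \tomega\exp(-\DD')$. Since $\tomega$ acts on power sums by $p_n\mapsto -p_n$, conjugation by $\tomega$ sends $\p/\p p_n$ to $-\p/\p p_n$ and $\p^2/\p p_n^2$ to $\p^2/\p p_n^2$. Thus under $\tomega$-conjugation, the quadratic terms $\tfrac{n}{2}\p^2/\p p_n^2$ in $\DD'$ are preserved, while the linear terms $+\p/\p p_{2n}$ get negated to $-\p/\p p_{2n}$, yielding exactly $\DD$. Exponentiating gives the claimed relation. This is the only nontrivial check; no serious obstacle arises, and the corollary follows.
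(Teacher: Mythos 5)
Your proof is correct and is exactly the argument the paper intends when it says ``compare Corollaries \ref{cor.slambda} and \ref{cor.b1}'': apply $\exp(-\DD)$ to both sides of Corollary \ref{cor.slambda}, use \eqref{eq.stosp} termwise on one side and the commutation $\exp(-\DD)\tomega=\tomega\exp(-\DD')$ together with Proposition \ref{prop.b1} and Corollary \ref{cor.b1} on the other. Your verification of the commutation relation by conjugating the differential operator with $\tomega$ is also the right (and only) substantive check.
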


\begin{proof}
Compare corollaries \ref{cor.slambda} and \ref{cor.b1}.
\end{proof}

\begin{proof}(of proposition \ref{prop.b1})
Recall, from \cite{KontsevichManin}
and \cite[Section 2.5]{GetzlerKapranov}, that a graph $G$ is a finite set
$\Flag(G)$ (whose elements are called flags) together with an involution
$\sigma$ and a partition $\lambda$. (By a partition of a set, we mean
a disjoint decomposition into several unordered, possibly empty, subsets
called blocks).

The vertices of $G$ are the blocks of the partition $\lambda$, and the set
of them is denoted by $\VVert(G)$. The subset of $\Flag(G)$ corresponding to
a vertex $v$ is denoted by $\Leg(v)$. Its cardinality is called the valence
of $v$, and denoted by $n(v)$. The degree of a graph with $k$ trivalent
vertices and $n$ legs equals to $2k+n$. This agrees with the grading of
stable graphs given in \cite{GetzlerKapranov}, if the genus label of every
trivalent vertex is zero.

The edges of $G$ are the pairs of flags forming a two-cycle of $\sigma$,
and the set of them is denoted by $\Edge(G)$. The legs of $G$ are the fixed
points of $\sigma$, and the set of them is denoted by $\Leg(G)$.

Following \cite{Weyl}, we  review a description of
the $\fsp$-invariant algebra $\TT(\HH)^{\fsp}$ in terms of an algebra of graphs.
A chord diagram of degree $m$ is a graph $(\sigma,\lambda^1_{2m})$ 
where $\sigma$ is an involution of the set
$[2m]=\{1,\dots,2m\}$ without fixed points and 
$\lambda^1_{2m}$ is the partition of the set
$[2m]$ in all one-element subsets. 
We consider the two flags in each chord as ordered. Given a chord diagram
of degree $2m$, one can associate to it a $\fsp$-invariant tensor in
$\TT^{2m}(\HH)^{\fsp}$ by placing a copy of the symplectic form on each
edge. Upon changing the order of the two flags in a chord, the associated
invariant tensor changes sign.
Thus, we get a map
\begin{equation}
\label{eq.chord}
\cc\dd  \to \TT(\HH)^{\fsp}
\end{equation} 
where $\cc\dd$ is the quotient of the vector space 
$\C\<\text{chord diagrams}\>$ over $\C$ spanned
by all chord diagrams modulo the relation $O_1$ shown in  figure 
\ref{f.relations}.
The above map is a stable algebra isomorphism,
where the product of chord diagrams is the disjoint union.

The above map can describe the $\fsp$-invariant part of
several quotients of the tensor algebra $\TT(\HH)$.
For every quotient $\aa$ of the tensor algebra $\TT(\HH)$ which we  
consider below, there is an onto map from the algebra
$\cc\dd$ to a
combinatorial algebra $\cc\aa$, together with a commutative diagram
$$
\begin{CD}
\cc\dd @>>> \TT(\HH)^{\fsp} \\
@VVV @VVV \\
\cc\aa @>>> \aa^{\fsp}
\end{CD}
$$
such that the map $\cc\aa\to\aa^{\fsp}$ is a stable isomorphism of graded 
algebras (which multiplies degrees by $2$).
We now discuss some quotients of the tensor algebra.

The natural projection $\TT^3(\HH)\to\Wedge^3(\HH)$ induces a map
$\TT(\TT^3(\HH))\to\TT(\Wedge^3(\HH))$. The corresponding quotient of
$\cc\dd$ is the algebra $\C\<\text{T-graphs-no legs}'\>$
of trivalent graphs (without legs,
equipped with an ordering of the set of their vertices,
a cyclic ordering of the three flags around each vertex, as well as
an orientation on each edge),
modulo the relations $(O_1,O_2)$, where $O_2$ is shown in figure 
\ref{f.relations}.
The map $\cc\dd\to\C\<\text{T-graphs-no legs}'\>/(O_1,O_2)$ 
sends a chord diagram
$(\sigma,\lambda^1_{6m})$ of degree $6m$ to the trivalent graph
$(\sigma,\lambda^3_{6m})$ of degree $6m$, where
$\lambda^3_{6m}=\{\{1,2,3\},\{4,5,6,\},\dots,\{6m-2,6m-1,6m\}\}$,
thus inducing a stable isomorphism of algebras
$$
\C\<\text{T-graphs-no legs}'\>/(O_1,O_2) \to \TT(\Wedge^3(\HH))^{\fsp}.
$$

The projection
$\Wedge^3(\HH)\to\<1^3\>$, 
induces a map $\TT(\Wedge^3(\HH))\to\TT(\<1^3\>)$, where throughout the
text, all projections of $\fsp$-modules will be well defined
up to a nonzero scalar.
The corresponding quotient of $\cc\dd$ is the algebra
$\C\<\text{T-graphs-no legs}'\>/(O_1,O_2,\lloop)$, where
$\lloop$ is the relation shown in figure \ref{f.relations},
thus inducing a  stable isomorphism of algebras
$$
\C\<\text{T-graphs-no legs}'\>(O_1,O_2,\lloop) \to \TT(\<1^3\>)^{\fsp}.
$$

Consider the projection $\TT(\<1^3\>)\to\Wedge(\<1^3\>)$.
The corresponding quotient of $\cc\dd$ is the algebra
of trivalent graphs
(without legs, equipped with a sign ordering of the set of their vertices,
a cyclic ordering of the three flags around each vertex, and an
orientation on each edge),
modulo the relations $(O_1,O_2,O_3,\lloop)$, where $O_3$ is shown in figure
\ref{f.relations}. It is easy to see 
that the above algebra is isomorphic to the
algebra $\C\<\text{T-graphs-no legs}\>$ of trivalent graphs
(without orientations or legs) modulo the relation $\lloop$, 
thus inducing a  stable isomorphism of algebras
$$
\C\<\text{T-graphs-no legs}\>/(\lloop)
\to\Wedge(\<1^3\>)^{\fsp}.
$$

\begin{figure}[htpb!]
\begin{center}
\includegraphics[height=1.8cm]{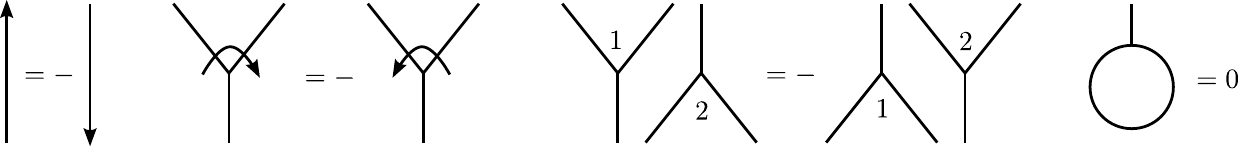}
\end{center}
\caption{The antisymmetry relations $O_1,O_2,O_3$ and the $\lloop$ 
relation. 
\label{f.relations}}
\end{figure}


Consider the projection $\Wedge(\<1^3\>)\to\AA$.
It was shown in \cite{GN} that the corresponding quotient of the algebra 
$\cc\dd$ is the algebra of trivalent graphs (without any orientations) 
modulo the relations $(\IH,\lloop)$,
where $\IH$ is shown in figure \ref{f.IH}.
 In addition, it was shown that the algebra
$\C\<\text{T-graphs}\>/(\IH,\lloop)$ is isomorphic to 
a free polynomial algebra $\Q[e_{2,0},e_{3,0},\dots]$
(where $e_{k,0}$, shown in figure \ref{f.ekn}, is of degree $2k-2$),
thus inducing a  stable isomorphism of algebras
\begin{equation}
\label{eq.Afsp}
\Q[e_{2,0},e_{3,0},\dots] \to\AA^{\fsp}.
\end{equation}

\begin{figure}[htpb!]
\begin{center}
\includegraphics[height=1.8cm]{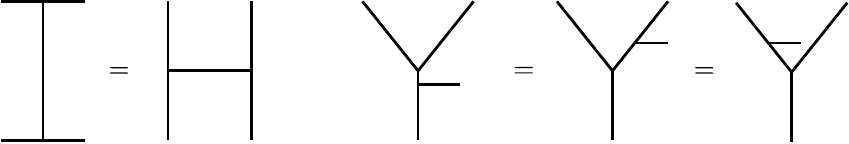}
\end{center}
\caption{On the left, the $\IH$ relation, with the understanding that the 
flags of $I$ and $H$ are not part of an edge. On the right, a consequence
of the $\IH$ relation.
\label{f.IH}}
\end{figure}


Consider the projection $\Wedge(\<1^3\>)\o\TT(\HH)\to\AA\o\TT(\HH)$.
The above discussion implies that the corresponding quotient of 
$\cc\dd$ is the algebra $\C\<\text{T-graphs}\>$
of trivalent graphs with ordered legs 
(equipped with an orientation on each edge
that connects two legs), modulo the relations $\IH,\lloop$.  
We claim that every connected  such graph 
with $2k-2+n$ trivalent vertices and $n$ legs, equals, modulo
the $\IH$ relation to the graph $e_{k,n}$ shown in figure \ref{f.ekn}.
Indeed, using the $\IH$ relation as in figure \ref{f.ekn}, we can move
edges touching a leg anywhere around the graph, thus we can
assume that there are no legs (i.e., $n=0$), in which case the result
follows from a previous discussion. Notice that $e_{k,n}$ is a trivial
representation of $\SS_n$.

\begin{figure}[htpb!]
\begin{center}
\includegraphics[height=1.8cm]{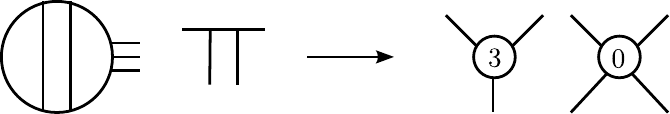}
\end{center}
\caption{The T-graph $e_{k,n}$ for $k\ge 1$ has $k-1$ vertical edges on 
a circle and $n$ horizontal legs. The T-graph $e_{0,n}$ for $n \ge 2$ is 
a tree with $n-2$ vertical legs. On the left, the graphs $e_{3,3}$ and 
$e_{0,4}$, and on the right their images. 
\label{f.ekn}}
\end{figure}


The stable isomorphism of algebras
$$
\C\<\text{T-graphs}\>/(\IH,\lloop)\to(\AA \otimes \TT(\HH))^{\fsp},
$$
implies that the $\SS$-module $\b$ defined by
$$
\b(n)=(\AA^\bullet \o \TT^n(\HH))^{\fsp}
$$
equals to $\e \circ( e_{0,2}+ \v)$, where $\e$ is the $\SS$-module
whose characteristic $\ch_t(\e)$ equals to $\sum_{h \geq 0} h_n$,
and 
$\v$ is the $\SS$-module with basis $e_{k,n}\in\v_{2k-2+n}(n)$,
$k \geq 0$, where $e_{k,n}$ spans a copy of the trivial representation
of $\SS_n$, excluding $e_{0,0}, e_{1,0}, e_{1,1}$ and $e_{0,2}$.
It follows that the characteristic
of $\v$ is given by the statement of theorem \ref{thm.A}, and that
$\ch_t(\b)=\Exp(h_2 +\ch_t(\v))$.

Finally, consider the projection $\Wedge(\<1^3\>)\o\TT(\HH)\to\AA\o
\TT_{\tp}(\HH)$.
The above discussion implies that the corresponding quotient of 
$\cc\dd$ is the quotient of the algebra $\C\<\text{T-graphs}\>$
modulo graphs some component of which contains an edge connecting two legs;
thus obtaining that $\b_{\tp}=\e \circ \v$, which concludes the proof
of proposition \ref{prop.b1}.
\end{proof}

Similarly, we have a stable isomorphism of algebras
$$
\C\<\text{T-graphs}\>/(\IH^1)\to(\AA^1 \otimes \TT(\HH))^{\fsp},
$$
(where $\IH^1$ is the relation of figure \ref{f.IH},
assuming that at most two of the four flags of the graphs $I$ and $H$ belong
to the same edge), which implies that
the $\SS$-module $\b^1$ defined by
$$
\b^1(n)=(\AA^\bullet \o \TT^n(\HH))^{\fsp}
$$
equals to $\e \circ(e_{0,2}+e_{1,1} + \v)$, and that the 
$\SS$-module $\b^1_{\tp}$ defined by
$$ \b^1_{\tp}(n)=(\AA^{1,\bullet} \o \TT^n_{\tp}(\HH))^{\fsp}
$$
equals to $\e \circ(e_{1,1} + \v)$, thus
$$
 \cch_t(\AA^1)=\tomega\exp(-\DD')\ch_t(\b^1_{\tp})=
\Exp(h_1 t + \ch_t(\v))
$$
which concludes the proof of theorem \ref{thm.A}.

\begin{remark}
\label{rem.di}
There is a curious degree preserving map from 
t-graphs $e_{k,n}$ of degree $2k-2+n > 0$ with $n$ legs
to stable graphs (in the terminology of \cite{GetzlerKapranov})
with one vertex of genus $k$ and $n$ legs, see figure
\ref{f.ekn}. There is also a similarity between the character 
$\ch_t(\v)$ of $\v$ and the Feynman transform of \cite{GetzlerKapranov}.
Notice also that the $\SS$-module $\v$ has an additional multiplication
$$\di: \v(n) \o \v(m) \to \v(n+m-2)$$
 defined by joining a leg
of $e_{k,n}$ with one of $e_{l,m}$, in other words by $e_{k,n} \di e_{l,m}
=e_{k+l,n+m-2}$. Under the isomorphism 
$ \b_{\tp} \cong \e \circ \v$, $\di$ 
corresponds to a map $\di: \b_{\tp}(n) \o \b_{\tp}(m) \to \b_{\tp}(n+m-2)$ 
(denoted by the same name) 
defined using the product in $\AA$ and the contraction map
$$\di: \TT(\HH) \o \TT(\HH) \to \TT(\HH)$$
 given by 
$$(a_1 \o \dots \o a_n) \o (b_1 \o \dots \o b_m) \to
\sum_{i,j} (a_i \cdot b_j) a_1 \o \dots \hat{a}_i \dots \o a_n \o
b_1 \o \dots \hat{b}_j \dots \o b_m,
$$  
where $a_i,b_j \in \HH$ and $\cdot: \HH \o \HH \to \C$ is given by the 
symplectic form.
\end{remark}


\section{Proof of theorem \ref{thm.2}}

Let $\n$ denote the $\SS$-module $\n(n)=H^\bull(\Ga,\TT^n(\HH))$.
Under the isomorphism $\b \cong \e \circ \v$, the discussion in the 
introduction implies the existence of a map 
$$\mu_F: \b(n) \to \n(n)$$
which is part of a commutative diagram:
$$
\begin{CD}
\b(n) \o \b(m) @>\mu_F(n) \o \mu_F(m) >> \n(n) \o \n(m) \\
@V\di VV                                  @VV \di V  \\
\b(n+m-2)      @>\mu_F(n+m-2)>>           \n(n+m-2)
\end{CD}
$$
where the vertical map on the right hand side is induced by the the cup product
follows by a contraction on $\TT(\HH)$.

Note that the left vertical map is onto. We claim that assuming Mumford's
conjecture, 
\begin{itemize}
\item
The right vertical map is onto
\item
$\mu_F(0)$ is onto (follows from Morita's papers, 
assuming Mumford's conjecture. We could give a proof here anyway.)
\item
$\mu_F(1)$ is onto.
\end{itemize}
Then, $\mu_F$ is onto and since $\ch_t(\b)=\ch_t(\n)$, $\mu_F$ is an
isomorphism, thus proving theorem \ref{thm.2}.


Let us do as an example the following special case for $R=\TT^3(\HH)$.
Assuming Mumford's conjecture
it follows from corollary \ref{cor.slambda} that $\text{dim}H^1(\Ga,
\TT^3(\HH))=1$, and that $\text{dim}H^1(\Ga^1,
\TT^3(\HH))=4$. 
With the notation of the introduction, after projecting 
$N \to N/[N,N]=\<1^3\>$, 
$\rho$ can be thought of as a crossed homomorphism
$\Ga_g \to \<1^3\> \hookrightarrow R$, 
in other words as a 1-cocycle of $\Ga_g$ with values
in $R$; let $[\rho] \in H^1(\Ga_g, R)$ denote its cohomology class.
Then $\mu_F(1)(e_{0,3})=[\rho]$, and 
Morita proves that $[\rho]$ is nonzero, by an explicit cocycle computation.
Similarly, $\mu^1_F(1)(e_{0,3})=[\rho^1] \in H^1(\Ga^1_g, R)$
is nonzero too.

$$
\begin{CD}
(A_1 \o R)^{\fsp} @>>> H^1(\Ga,R) \\
@VVV                      @VVV       \\
(A^1_1 \o R)^{\fsp} @>>> H^1(\Ga^1,R)
\end{CD}
\text{ where }
\begin{CD}
e_{0,3}           @>>> [\rho] \\
@VVV                           @VVV  \\
3 e_{1,1}e_{0,2} + e_{0,3} @>>> [\rho^1] \\
\end{CD}
$$
where $2 e_{1,1} e_{0,2}$ correspond to the three labelings of the legs
of the graph $e_{1,1} e_{0,2}$, and these maps are up to scalar multiples
that are missing.

Note that Morita's map $k_0$ \cite{KM} is simply
$\mu_F(2)(e_{0,2}) \in H^1(\CM_g^2,\HH)$, and that the contraction formula
that they have must be included in our commutative diagram above.





\section{The symplectic character of the Torelli Lie algebra}
\label{sec.koszul}

In this section we give a proof of Theorem \ref{thm.torelli} using
properties of quadratic algebras and Koszul duality, explained in detail
in \cite{PP}.
Recall from Section \ref{sub.hain} the quadratic presentation of the
$R(\fsp)$-Lie algebra $\ft_g$. Hain also established
a central extension of graded Lie algebras
$$
1 \to \Q(2) \to \ft_g \to \fu_g \to 1,
$$
where $\Q(2)$ is a one dimensional abelian subalgebra of $\ft_g$
in degree $2$. The characteristic of the Lie algebra $\ft_g$ and 
its universal enveloping algebra $\mathsf{U}(\ft_g)$ are related
by
$$
\cch_t(\ft_g)=\Log(\cch_t(\mathsf{U}(\ft_g))=t^2 + 
\Log(\cch_t(\mathsf{U}(\fu_g))
$$
On the other hand, the quadratic dual $\mathsf{U}(\fu_g)^!$ of 
$\mathsf{U}(\ft_g)$ equals, by definition, 
to $\AA$; see \cite[Chpt.1.2]{PP}. It follows that that $\AA$ is Koszul 
if and only if $\fu$ is Koszul if and only if $\ft$ is Koszul. 
Since Koszul dual algebras $(B,B^!)$ statisfy $\cch_t(B)\cch_{-t}(B^!)=1$ 
(see \cite[Cor.2.2]{PP}), this concludes the proof of Theorem 
\ref{thm.torelli}.


\section{Computations}
\label{sec.compute}


Theorem \ref{thm.A} implies that the character of $\AA_n$ is a linear
combination of symplectic Schur functions $s_{\<\lambda\>}$ for
$|\lambda| \le 3n$, and can thus be calculated by truncating $\Exp$ 
in degrees at most $3n$. Note that partitions with $|\lambda|=3n$
appear in $\AA_n$. 

Assuming that $\AA$ is Koszul, Theorem \ref{thm.torelli}
implies that the character of $\ft_n$ is a linear
combination of symplectic Schur functions $s_{\<\lambda\>}$ for
$|\lambda| \le n+2$, and can be calculated by truncating $\Exp$ 
in degrees at most $3n$. 

A computation of the character of $\AA$ and (assuming
Koszulity) of $\ft$ was obtained up to degree $12$, 
using J. Stembridge's \cite{Stembridge} symmetric function package {\tt SF} 
for {\tt maple}. Below, we present 
the results for $\AA_n$ for $n=1,\dots,4$ and for $\ft_n$ 
for $n=1,\dots,8$. A table for $n \leq 13$ is available in~\cite{Ga:data}.

To explain our computations, we add a few comments on some of our equations.

Equation~\eqref{eq.stosp} states the following:
if $f=\sum_\lambda a_\lambda s_\lambda$, then 
$\exp(-\DD) f = \sum_\lambda a_\lambda s_{\<\lambda\>}$.

Using $\Exp(f+g)=\Exp(f)\Exp(g)$ and $h_n(t)=t^n$ and 
$\Exp(t)=\sum_{n=0}^\infty h_n(t)=(1-t)^{-1}$, we see that the right 
hand side of Equation~\eqref{eq.gammainf} is given by:
$$
\Exp \Bigl( \frac{1}{1-t^2} \Bigr) = 
\Exp \Bigl( \sum_{n=0}^\infty t^{2n} \Bigr)=\prod_{n=0}^\infty
\Exp ( t^{2n} )=\frac{1}{\prod_{n=1}^\infty (1-t^{2n})} 
$$

Corollary \ref{cor.b1} states that if
$\tomega \, \ch_t(\b_{\tp}) =\sum_{\lambda} a_{\lambda} s_{\lambda}$, 
then
$\cch_t(\AA) = \sum_{\lambda} a_{\lambda} s_{\<\lambda \>}$.

The next lemma is an effective way to compute the character $\Exp(\ch_t(\v))$
using the {\tt SF} package.

\begin{lemma}
\label{lem.compute}
We have:
\begin{equation}
\label{eq.chtv}
\Exp(\ch_t(\v)) = \sum_{\lambda} \gamma_{\lambda}
\end{equation}
where
\begin{equation}
\label{eq.gammal}
\gamma_{(1^{l_1}2^{l_2}\dots)} =\prod_n h_{l_n} \circ \frac{t^{c_n} h_n}{1-t^2}
=t^{\delta(\lambda)} \prod_n h_{l_n} \circ \frac{h_n}{1-t^2}
\end{equation}
and $c_n=n-2$ (resp., $0$, $3$, $2$) when $n \geq 3$ 
(resp., $n=0$, $n=1$, $n=2$) and $\d(1^{l_1}2^{l_2}\dots)=\sum_n c_n l_n$.
\end{lemma}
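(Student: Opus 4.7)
The plan is to reduce $\Exp(\ch_t(\v))$ to a product indexed by $n$ via the multiplicativity of $\Exp$, and then to expand each factor as a sum of plethysms.

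First, I would regroup the formula for $\ch_t(\v)$ from Theorem \ref{thm.A} by the value of $n$. For each fixed $n\geq 1$, the contribution from $\sum_{2k-2+n\geq 0} t^{2k-2+n}h_n$ is a geometric series in $t^2$: for $n=1$, only $k\geq 1$ contributes, giving $th_1/(1-t^2)$; for $n\geq 2$, all $k\geq 0$ contribute, giving $t^{n-2}h_n/(1-t^2)$. Combining with the corrections $-th_1$ (for $n=1$) and $-h_2$ (for $n=2$), these two affected geometric series simplify to $t^3 h_1/(1-t^2)$ and $t^2 h_2/(1-t^2)$ respectively. Thus, overall,
\[
\ch_t(\v)=\sum_{n\geq 1}\frac{t^{c_n}h_n}{1-t^2},
\]
with $c_1=3$, $c_2=2$, and $c_n=n-2$ for $n\geq 3$, matching the values in the statement of the lemma.

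Next, by the multiplicativity of $\Exp$ (equation \eqref{eq.expab}) I would factor
\[
\Exp(\ch_t(\v))=\prod_{n\geq 1}\Exp\!\left(\frac{t^{c_n}h_n}{1-t^2}\right),
\]
and expand each factor via the definition $\Exp(f)=\sum_{l\geq 0} h_l\circ f$ of equation \eqref{eq.Exp}. Multiplying out the product, every term arises from a unique choice of multiplicities $(l_1,l_2,\dots)$ with only finitely many nonzero entries, and such tuples are in bijection with partitions $\lambda=(1^{l_1}2^{l_2}\cdots)$. This yields the first form of \eqref{eq.gammal}. For the second equality, I would use the plethysm identity $h_l\circ(t^c g)=t^{cl}(h_l\circ g)$, which follows from the rule $p_k\circ t=t^k$ (already recorded in the paper), giving $p_k\circ(t^c g)=t^{ck}(p_k\circ g)$, together with the expansion $h_l=\sum_{\mu\vdash l} p_\mu/z(\mu)$: the scalar $t^{c|\mu|}=t^{cl}$ factors out of every such $p_\mu$. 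Applying this to each factor extracts $t^{c_n l_n}$, and the product over $n$ produces the prefactor $t^{\delta(\lambda)}$.

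I do not foresee a serious obstacle: the proof is a direct manipulation of generating functions using standard properties of $\Exp$ and plethysm. The only bookkeeping required is in the small-$n$ cases, where the shifted lower limit of summation for $n=1$ and the explicit subtractions $-th_1$ and $-h_2$ combine to produce the non-uniform values $c_1=3$ and $c_2=2$ rather than $c_n=n-2$.
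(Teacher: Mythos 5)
Your argument is, step for step, the proof the paper gives: split $\ch_t(\v)$ into its $h_n$-components, apply $\Exp(f+g)=\Exp(f)\Exp(g)$ to turn $\Exp(\ch_t(\v))$ into a product over $n$, expand each factor by $\Exp(f)=\sum_l h_l\circ f$, index the cross terms by partitions, and extract $t^{c_n l_n}$ from each factor using homogeneity of $h_{l_n}$ under plethysm (your justification via $p_k\circ t^c=t^{ck}$ and $h_l=\sum_{\mu}p_\mu/z(\mu)$ is a correct unwinding of the one-line homogeneity remark in the paper). The computations of $c_1=3$ and $c_2=2$ from the corrections $-th_1$ and $-h_2$ are also exactly as in the paper.

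There is one concrete slip: you regroup the formula of Theorem \ref{thm.A} ``by the value of $n$'' but then start the sum at $n\ge 1$, silently discarding the $n=0$ component. That component is not zero: the terms of $\sum_{2k-2+n\ge 0}t^{2k-2+n}h_n$ with $n=0$ contribute $\sum_{k\ge 1}t^{2k-2}h_0=\tfrac{1}{1-t^2}$ (these are the closed graphs $e_{k,0}$, which are precisely the generators of $\AA^{\fsp}$, so they certainly belong to $\v$). This is why the lemma bothers to define $c_0=0$, and the paper's own proof carries the summand $\tfrac{t^{c_0}h_0}{1-t^2}=\tfrac{1}{1-t^2}$ inside $\Exp$ as a separate factor of the product. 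Dropping it changes $\Exp(\ch_t(\v))$ by the nontrivial scalar power series $\Exp\bigl(\tfrac{1}{1-t^2}\bigr)$, so your identity $\ch_t(\v)=\sum_{n\ge 1}\tfrac{t^{c_n}h_n}{1-t^2}$ and the resulting product formula are off by that factor. Everything else goes through verbatim once the $n=0$ factor is restored and the indexing is understood to record its multiplicity $l_0$ alongside the partition $(1^{l_1}2^{l_2}\dots)$.
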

Note that to calculate the $\Lambda_n\[t\]$ part of $\Exp(\ch_t(\v))$, 
we can truncate Equation \eqref{eq.chtv} to partitions $\lambda$ with 
$|\lambda| \leq 3n$. Indeed, the worst case occures when $\lambda=(3^n)$
when $|\lambda|=3n$ and $\d(\lambda)=n$. Compare also with 
\cite[Eqn.(1)]{Looijenga}.

\begin{proof}
Using Equations \eqref{eq.Exp} and \eqref{eq.expab}, 
we have
\begin{eqnarray*}
\Exp(\ch_t(\v)) &=& \Exp\Bigl( \frac{1}{1-t^2} + \frac{t^3 h_1}{1-t^2} +
\frac{t^2 h_2}{1-t^2} + \sum_{n=3}^\infty \frac{t^{n-2} h_n}{1-t^2} 
\Bigr) \\
&=& 
\prod_{n=0}^\infty \Exp\Bigl(\frac{t^{c_n} h_n}{1-t^2} \Bigr) \\
&=&
\prod_{n=0}^\infty \sum_{l_n=0}^\infty h_{l_n} \circ \frac{t^{c_n} h_n}{1-t^2} 
\end{eqnarray*}
Encoding partitions $\lambda=(1^{l_1}2^{l_2}\dots)$ by their number of parts,
the first  result follows. Since $h_{l_n}$ is homogeneous of degree $l_n$,
we have $h_{l_n} \circ t^{c_n} h_n=t^{l_n c_n}  h_{l_n} \circ h_n$. The result
follows.
\end{proof}


\begin{center}
\begin{tabular}{|c|l|}
\hline
$n$ & $\AA_n$ \\ \hline
$1$ & $\<1^3\>$ \\
$2$ & $\<0\> + \<1^4\> + \<1^2\> + \<1^6\> + \<2^2, 1^2\>$ \\ 
$3$ & $\<1^9\> + \<2^2, 1^5\> + \<2^3, 1^3\> + \<3^2, 1^3\> + \<3, 2^3\> 
  + 2\,\<1^5\> + \<2, 1^3\> + \<2^2, 1\> + \<1^7\> + \<2, 1^5\> 
  $ \\ & $+ \<2^2, 1^3\> + \<2^3, 1\> + 2\,\<1^3\> + \<1\>$ \\ 
$4$ & $2\,\<0\> + 4\,\<2^2, 1^4\> + 
\<3, 2^3, 1\> + 4\,\<1^6\> + 4\,\<1^4\> + 2\,\<1^2\> 
  + 3\,\<2^2, 1^2\> + \<2^2\> + \<3^4\> + 2\,\<2^3, 1^2\> 
  $ \\ & $+ 3\,\<1^8\> 
  + 2\,\<2^4\> + \<3^2, 1^2\> + \<2^2, 1^8\> + \<3, 2^2, 1\> + 2\,\<2, 1^6\> 
  + \<1^{12}\> + \<3, 2^3, 1^3\> + \<2^3, 1^6\> 
  $ \\ & $+ \<3^2, 1^6\> + \<2^4, 1^4\> + 2\,\<2^4, 1^2\> + 2\,\<2^3, 1^4\> 
  + \<3, 2, 1^5\> + \<3, 2^2, 1^3\> + \<3^2, 1^4\> + \<3^2, 2^2\> 
  $ \\ & $+ \<3^2, 2, 1^2\> 
  + \<2, 1^2\> + \<2, 1^8\> + 2\,\<2^2, 1^6\> 
  + \<1^{10}\> 
  + \<3, 2, 1^3\> + \<2^3\> + 2\,\<2, 1^4\> + \<3^2, 2, 1^4\> 
  $ \\ & $+ \<2^6\> 
  + \<3^2, 2^2, 1^2\> + \<4^2, 1^4\> + \<4, 3, 2^2, 1\> $\\
\hline
\end{tabular}
\end{center}


\begin{center}
\begin{tabular}{|c|l|}
\hline
$n$ & Degree $n$ part of $\ft$ \\ \hline
$1$ & $\<1^3\>$ \\ [0.20cm] 
$2$ & $\<0\> + \<2^2\>$ \\ [0.20cm] 
$3$ & $\<3,1^2\>$ \\ [0.20cm] 
$4$ & $\<2\> + \<3,1\> + \<4,2\>  + \<2^3\> + \<3,1^3\>$ \\ [0.20cm] 
$5$ & $\<2, 1\> + \<1^3\> + \<4, 1\> + \<3, 2\> + \<3, 1^2\> + \<2^2, 1\>
+ \<2, 1^3\> 
+ \<5, 1^2\> + \<4, 2, 1\> + \<3^2, 1\>$ \\ [0.20cm] & $+ \<3, 2, 1^2\> + \<2^2, 1^3\>
$ \\ [0.20cm]
$6$ & $ \<2\> + \<0\> + \<2^2, 1^4\> + \<2^4\> + \<3, 2, 1^3\>
     + \<3, 2^2, 1\>  +  2 \<3^2, 1^2\> + \<4, 1^4\> + \<4, 2, 1^2\>
     + 2 \<4, 2^2\>$ \\ & $ + \<4, 3, 1\> + \<4^2\> +  \<5, 1^3\> 
     + \<5, 2, 1\>
     + \<6, 2\> + \<1^6\> + \<2, 1^4\> +  3 \<2^2, 1^2\>
     + \<2^3\> + 2 \<3, 1^3\> $ \\ & $+ 4 \<3, 2, 1\> + 2 \<3^2\>
     + 2 \<4, 1^2\> +  2 \<4, 2\> + 2 \<5, 1\> + 2 \<1^4\>
     + 2 \<2, 1^2\> + 5 \<2^2\> + 2 \<3, 1\> + 2 \<4\> 
     $ \\ & $+ 2 \<1^2\> $ \\ [0.20cm]
$7$ & $ \<7, 1^2\> + 6\<4, 1^3\> + 10\<4, 2, 1\> + 4\<4, 3\>
      + 4\<5, 1^2\> + 4\<5, 2\> + 2\<6, 1\> + 13\<3, 1^2\> +  7\<4, 1\>
      $ \\ & $+ \<5\> + 4\<3\> + 2\<3^3\> + \<4, 1^5\> + 2\<4, 3, 2\>
      + 3\<5, 2, 1, 1\> + \<5, 2^2\> + 3\<5, 3, 1\> + \<6, 1^3\>
      $ \\ & $+ \<6, 2, 1\> + \<6, 3\> + 3\<3, 2^2, 1^2\> + \<3, 1^6\>
      + 2\<3^2, 2, 1\> + 3\<4, 3, 1^2\> + 3\<4, 2^2, 1\>
      + 3\<4, 2, 1^3\> $ \\ & $+ \<4^2, 1\> + \<3, 2, 1^4\>
      + \<2^3, 1^3\> + \<3^2, 1^3\> + \<3, 2^3\> + 7\<2, 1\>
      + 5\<1^3\> + 2\<2, 1^5\> + 4\<2^2, 1^3\>
      $ \\ & $+ 4\<2^3, 1\> + 4\<3, 1^4\> + 10\<3, 2, 1^2\>
      + 6\<3, 2^2\> + 6\<3^2, 1\> + \<1\> + 8\<2^2, 1\> 
      + 8\<2, 1^3\> + \<1^5\> $ \\ & $+ 8\<3, 2\> $  \\ [0.20cm]
$8$ & $  15 \<2\> + 5 \<2^2, 1^4\> + 3 \<2^4\>
      + 20 \<3, 2, 1^3\> + 23 \<3, 2^2, 1\> + 15 \<3^2, 1^2\>
      + 11 \<4, 1^4\> +  29 \<4, 2, 1^2\> $ \\ & $ + 17 \<4, 2^2\>
      + 22 \<4, 3, 1\> + 3 \<4^2\> + 10 \<5, 1^3\> + 20 \<5, 2, 1\>
      + 6 \<6, 2\> + \<1^6\> + 16 \<2, 1^4\>
      $ \\ & $+ 20 \<2^2, 1^2\> + 20 \<2^3\> + 32 \<3, 1^3\> + 45 \<3, 2, 1\>
      + 9 \<3^2\> + 25 \<4, 1^2\> + 31 \<4, 2\> +   12 \<5, 1\>
      $ \\ & $+ 6 \<1^4\> + 29 \<2, 1^2\> + 16 \<2^2\> + 31 \<3, 1\> + 8 \<4\>
      + 5 \<1^2\> + 3 \<7, 1\> + 5 \<6\> +  8 \<5, 3, 1^2\> 
      $ \\ & $+ 4 \<5, 3, 2\>
      + 3 \<5, 4, 1\> + 2 \<6, 1^4\> + 4 \<6, 2^2\> + 3 \<6, 3, 1\>
      + 2 \<6, 4\> + \<7, 1^3\> + 2 \<7, 2, 1\> 
      $ \\ & $+ \<8, 2\> + 10 \<5, 3\>
      + \<4, 3^2\> + \<5, 1^5\> + 5 \<5, 2, 1^3\>
      + 5 \<5, 2^2, 1\> + 3 \<6, 2, 1^2\> + 6 \<6, 1^2\> 
      $ \\ & $+ 3 \<3^3, 1\>
      + \<3^2, 2^2\> + 5 \<4, 2^2, 1^2\> + 2 \<2, 1^6\>
      + 5 \<4, 2, 1^4\> + \<2^3, 1^4\> + 9 \<4, 3, 2, 1\>
      + 5 \<4, 2^3\> $ \\ & $+ 6 \<4, 3, 1^3\> + \<4^2, 1^2\> + 5 \<4^2, 2\>
      + 14 \<3^2, 2\> + 9 \<2^3, 1^2\> + 8 \<3, 1^5\>
      + 4 \<3, 2^2, 1^3\> + \<3, 1^7\>
      $ \\ & $+ 2 \<3, 2, 1^5\> + 2 \<2^5\> + 6 \<3^2, 2, 1^2\>
      + \<3^2, 1^4\> + 3 \<3, 2^3, 1\> $
\\ \hline 
\end{tabular}
\end{center}


\appendix

\section{$\lambda$-rings}

Using the ring $\Lambda$, it is possible to give a very direct definition
of Gro\-then\-dieck's $\lambda$- and special $\lambda$-rings. We follow
more recent usage in referring to special $\lambda$-rings as
$\lambda$-rings, and to $\lambda$-rings as pre-$\lambda$-rings.

\subsection{Pre-$\lambda$-rings} A pre-$\lambda$-ring is a commutative ring
$R$, together with a morphism of commutative rings $\sigma_t:R\to R\[t\]$
such that $\sigma_t(a)=1+ta+O(t^2)$. Expanding $\sigma_t$ in a power series
$$
\sigma_t(a) = \sum_{n=0}^\infty t^n \sigma_n(a) ,
$$
we obtain endomorphisms $\sigma_n$ of $R$ such that $\sigma_0(a)=1$,
$\sigma_1(a)=a$, and
$$
\sigma_n(a+b) = \sum_{k=0}^n \sigma_{n-k}(a) \sigma_k(b) .
$$
There are also operations $\lambda_k(a)=(-1)^k\sigma_k(-a)$, with
generating function
\begin{equation} \label{invert}
\lambda_t(a) = \sum_{n=0}^\infty t^n \lambda_n(a) = \sigma_{-t}(a)^{-1} .
\end{equation}
The $\lambda$-operations are polynomials in the $\sigma$-operations with
integral coefficients, and vice versa. In this paper, we take the
$\sigma$-operations to be more fundamental; nevertheless, following custom,
the structure they define is called a pre-$\lambda$-ring.

Given a pre-$\lambda$-ring $R$, there is a bilinear map $\Lambda\o R\to R$,
which we denote $f\circ a$, defined by the formula
$$
(h_{n_1}\dots h_{n_k})\circ a = \sigma_{n_1}(a)\dots\sigma_{n_k}(a) .
$$
The image of the power sum $p_n$ under this map is the operation on $R$
known as the Adams operation $\psi_n$.  We denote the operation
corresponding to the Schur function $s_\lambda$ by $\sigma_\lambda$. Note
that \eqref{H-P} implies the relation
$$
\sigma_t(a) = \exp \Bigl( \sum_{n=1}^\infty \frac{t^n\psi_n(a)}{n} \Bigr) ,
$$
from which the following result is immediate.
\begin{proposition}
If $R$ and $S$ are pre-$\lambda$-rings, their tensor product $R\o S$ is a
pre-$\lambda$-ring, with $\sigma$-operations
$$
\sigma_n(a\o b) = \sum_{|\lambda|=n} \sigma_\lambda(a) \o \sigma_\lambda(b)
,
$$
and Adams operations $\psi_n(a\o b) = \psi_n(a) \o \psi_n(b)$.
\end{proposition}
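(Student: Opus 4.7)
The formulas for $\sigma_n$ and $\psi_n$ in the proposition are equivalent via Cauchy's identity \eqref{Cauchy}, which converts the power-sum form of $\sigma_t$ into its Schur-function form. My plan is to take the Adams-operation formula as the primary datum, derive the $\sigma$-operation formula from it via Cauchy, and then verify the pre-$\lambda$-ring axioms.

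\textbf{Step 1.} Define $\psi_n\colon R\o S \to R\o S$ on elementary tensors by $\psi_n(a\o b) = \psi_n(a)\o\psi_n(b)$ and extend $\Z$-bilinearly. Well-definedness reduces to checking $\Z$-bilinearity, which follows from the additivity of $\psi_n$ on each factor $R$ and $S$. That additivity is itself a consequence of the pre-$\lambda$-ring axiom $\sigma_t(a+b)=\sigma_t(a)\sigma_t(b)$: taking formal logarithms and invoking \eqref{P-H} yields $\log\sigma_t(a)=\sum_{n\ge1}t^n\psi_n(a)/n$ in $R_\Q\[t\]$, whose coefficients must therefore be additive in $a$.

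\textbf{Step 2.} Work rationally and define
$$\sigma_t(c)=\exp\Bigl(\sum_{n\ge1}\tfrac{t^n}{n}\psi_n(c)\Bigr)\qquad\text{for } c\in (R\o S)_\Q.$$
Because $\psi_n$ is $\Q$-linear, $\sigma_t$ is automatically additive-to-multiplicative, $\sigma_t(c+d)=\sigma_t(c)\sigma_t(d)$, and $\psi_1=\mathrm{id}$ forces $\sigma_t(c)=1+tc+O(t^2)$. Evaluating on an elementary tensor and applying Cauchy's identity \eqref{Cauchy} with $p_k(x)\leftrightarrow\psi_k(a)$, $p_k(y)\leftrightarrow\psi_k(b)$ yields
$$\sigma_t(a\o b)=\sum_\lambda t^{|\lambda|}\sigma_\lambda(a)\o\sigma_\lambda(b),$$
an expression with coefficients visibly in $R\o S$, which is the formula claimed in the proposition.

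\textbf{Step 3.} For a general $c=\sum_i a_i\o b_i$, the rational additive-to-multiplicative property gives $\sigma_t(c)=\prod_i\sigma_t(a_i\o b_i)$ inside $(R\o S)_\Q\[t\]$, and the right-hand side is integral by Step~2. Hence $\sigma_t$ descends to a well-defined map $R\o S\to (R\o S)\[t\]$ satisfying the pre-$\lambda$-ring axioms. The one point requiring attention is that $\sigma_t(c)$ might \emph{a priori} depend on the chosen representation of $c$ as a sum of elementary tensors; this concern is dispatched by the rational construction, which defines $\sigma_t$ intrinsically on $R_\Q\o S_\Q$, the Schur-function formula for elementary tensors then being invoked only to verify integrality.
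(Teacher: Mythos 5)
Your proof is correct and follows essentially the same route as the paper, which derives the result as an immediate consequence of the identity $\sigma_t(a)=\exp\bigl(\sum_{n\ge1}t^n\psi_n(a)/n\bigr)$ together with Cauchy's formula \eqref{Cauchy}; you simply spell out the details (bilinearity of the Adams operations, integrality via the Schur-function expansion) that the paper leaves implicit. The only caveat, shared with the paper's own treatment, is that the passage through $(R\o S)_\Q$ tacitly assumes torsion-freeness, which is harmless in the contexts where the proposition is used.
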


For example, $\sigma_2(a\o b) = \sigma_2(a)\o\sigma_2(b) +
\lambda_2(a)\o\lambda_2(b)$.

\subsection{$\lambda$-rings} 

The polynomial ring $\Z[x]$ is a
pre-$\lambda$-ring, with $\sigma$-operations characterized by the formula
$\sigma_n(x^i)=x^{ni}$. Taking tensor powers of this pre-$\lambda$-ring
with itself, we see that the polynomial ring $\Z[x_1,\dots,x_k]$ is a
pre-$\lambda$-ring. The $\lambda$-operations on this ring are equivariant
with respect to the permutation action of the symmetric group $\SS_k$ on
the generators, hence the ring of symmetric functions
$\Z[x_1,\dots,x_k]^{\SS_k}$ is a pre-$\lambda$-ring. Taking the limit
$k\to\infty$, we obtain a pre-$\lambda$-ring structure on $\Lambda$.

\begin{definition}
A $\lambda$-ring is pre-$\lambda$-ring such that if $f,g\in\Lambda$ and
$x\in R$,
\begin{equation}
\label{lambda-ring}
f\circ(g\circ x)=(f\circ g)\circ x .
\end{equation}
\end{definition}

By definition, the pre-$\lambda$-ring $\Lambda$ is a $\lambda$-ring; in
particular, the operation $f\circ g$, called plethysm, is associative.

The following result (see Knutson \cite{Knutson}) is the chief result in
the theory of $\lambda$-rings.
\begin{theorem}
\label{universal}
$\Lambda$ is the universal $\lambda$-ring on a single generator $h_1$.
\end{theorem}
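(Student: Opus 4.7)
The plan is to produce, for every $\lambda$-ring $R$ and every $x\in R$, a unique $\lambda$-ring homomorphism $\phi_x\colon\Lambda\to R$ with $\phi_x(h_1)=x$. The construction is forced: the pre-$\lambda$-ring structure on $R$ already supplies a bilinear pairing $\Lambda\otimes R\to R$, $(f,a)\mapsto f\circ a$, so I simply set $\phi_x(f)=f\circ x$. The normalization $\phi_x(h_1)=x$ is then the identity $h_1\circ x=\sigma_1(x)=x$, which is immediate from the defining formula $(h_{n_1}\cdots h_{n_k})\circ a=\sigma_{n_1}(a)\cdots\sigma_{n_k}(a)$.

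Next I would check that $\phi_x$ is a ring homomorphism, which reduces to the identity $(fg)\circ x=(f\circ x)(g\circ x)$ in $R$. For monomials $f=h_\lambda$ and $g=h_\mu$ this is built directly into the defining formula, since $h_\lambda h_\mu=h_{\lambda\cup\mu}$ and $\sigma_{\lambda\cup\mu}(x)=\sigma_\lambda(x)\sigma_\mu(x)$; extending by $\Z$-linearity in both slots gives the general case. The heart of the proof is then to show that $\phi_x$ commutes with the $\sigma_n$-operations, i.e.\ $\sigma_n(f)\circ x=\sigma_n(f\circ x)$ for every $f\in\Lambda$. Since $\Lambda$ is already known to be a $\lambda$-ring (by construction as the inverse limit of $\Z[x_1,\dots,x_k]^{\SS_k}$), I can rewrite the left-hand side as $\sigma_n(f)=h_n\circ f$, and the right-hand side as $\sigma_n(f\circ x)=h_n\circ(f\circ x)$. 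The required equality then reads $(h_n\circ f)\circ x=h_n\circ(f\circ x)$, which is precisely the $\lambda$-ring axiom \eqref{lambda-ring} applied to the target $R$ with the elements $h_n,f\in\Lambda$ and $x\in R$. This is the single place where the full $\lambda$-ring axiom for $R$ is used; a pre-$\lambda$-ring target would not suffice.

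For uniqueness, suppose $\psi\colon\Lambda\to R$ is any $\lambda$-ring homomorphism with $\psi(h_1)=x$. Applying $\sigma_n$ and using compatibility yields $\psi(\sigma_n h_1)=\sigma_n(\psi h_1)=\sigma_n(x)$. Since $\sigma_n(h_1)=h_n$ in $\Lambda$ and $\sigma_n(x)=h_n\circ x=\phi_x(h_n)$, this gives $\psi(h_n)=\phi_x(h_n)$ for every $n\ge 1$. As $\Lambda=\Z[h_1,h_2,\dots]$ is a polynomial ring on the $h_n$ and both $\psi,\phi_x$ are ring homomorphisms, $\psi=\phi_x$. The main (and essentially only) obstacle is the $\sigma_n$-compatibility step: recognizing that one must substitute the plethystic identity $\sigma_n(f)=h_n\circ f$ in $\Lambda$ into both sides so as to reduce the required equation to precisely axiom \eqref{lambda-ring}. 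Everything else is either immediate from the defining formula or a purely formal consequence of $\Lambda$ being free as a commutative ring on the $h_n$.
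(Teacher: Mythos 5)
Your argument is correct and complete. Note, however, that the paper itself offers no proof of this theorem: it is stated with a citation to Knutson, so there is no ``paper's approach'' to match. What you have written is the standard verification of the universal property, and it is the right one: the candidate map $\phi_x(f)=f\circ x$ is forced, its multiplicativity is built into the defining formula $(h_{n_1}\cdots h_{n_k})\circ a=\sigma_{n_1}(a)\cdots\sigma_{n_k}(a)$ on the monomial basis together with bilinearity, and the only genuinely nontrivial point --- compatibility with the $\sigma_n$ --- is exactly where the associativity axiom \eqref{lambda-ring} for the target $R$ enters, via the identity $\sigma_n(f)=h_n\circ f$ in $\Lambda$ and $\sigma_n(f\circ x)=h_n\circ(f\circ x)$ in $R$. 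Your uniqueness argument, using $\sigma_n(h_1)=h_n$ and the fact that $\Lambda=\Z[h_1,h_2,\dots]$ is polynomial on the $h_n$, is likewise correct. The one background fact you lean on without proof (as does the paper) is that $\Lambda$ itself satisfies the $\lambda$-ring axiom, i.e.\ that plethysm is associative; if you wanted a fully self-contained account you would need to verify that separately, but within the framework the paper sets up your proof is sound.
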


This theorem makes it straightforward to verify identities in
$\lambda$-rings: it suffices to verify them in $\Lambda$. As an
application, we have the following corollary.

\begin{corollary}
The tensor product of two $\lambda$-rings is a $\lambda$-ring.
\end{corollary}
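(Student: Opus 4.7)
The plan is to verify that the pre-$\lambda$-ring structure on $R \otimes S$ from the preceding Proposition satisfies the $\lambda$-ring associativity axiom \eqref{lambda-ring}, namely $(f \circ g) \circ z = f \circ (g \circ z)$ for all $f, g \in \Lambda$ and $z \in R \otimes S$. My approach is through Adams operations, using Theorem \ref{universal} as a source of universal identities on $\Lambda$.

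The preceding Proposition supplies $\psi_n(a \otimes b) = \psi_n(a) \otimes \psi_n(b)$. I would first check that these Adams operations on $R \otimes S$ are ring endomorphisms satisfying $\psi_m\psi_n = \psi_{mn}$: multiplicativity on elementary tensors,
\[
\psi_n\bigl((a \otimes b)(c \otimes d)\bigr) = \psi_n(ac) \otimes \psi_n(bd) = \bigl(\psi_n(a)\psi_n(c)\bigr) \otimes \bigl(\psi_n(b)\psi_n(d)\bigr) = \psi_n(a \otimes b)\,\psi_n(c \otimes d),
\]
follows from multiplicativity of $\psi_n$ on each of the $\lambda$-rings $R$ and $S$; additivity of $\psi_n$ (which holds in any pre-$\lambda$-ring) propagates multiplicativity to all of $R \otimes S$. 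The composition identity $\psi_m \psi_n(a \otimes b) = \psi_{mn}(a \otimes b)$ reduces immediately to the corresponding identity on $R$ and $S$.

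I would then deduce the axiom \eqref{lambda-ring} from these Adams-operation identities. Using the generating-function identity $\sigma_t(z) = \exp\bigl(\sum_n t^n \psi_n(z)/n\bigr)$, valid after tensoring with $\Q$, the plethystic action of $h \in \Lambda$ on any element $z$ of a pre-$\lambda$-ring takes the form $h \circ z = h(\psi_1(z), \psi_2(z), \dots)$ when $h$ is written in power-sum coordinates. The axiom \eqref{lambda-ring} then reduces, via the defining formula $p_m \circ g = g(p_m, p_{2m}, \dots)$ for plethysm in $\Lambda$, to the combinatorial identity $\psi_m(g \circ z) = g(\psi_m(z), \psi_{2m}(z), \dots)$, which in turn follows from $\psi_m$ being a ring map with $\psi_m \psi_n = \psi_{mn}$. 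The principal obstacle is the first step: multiplicativity of $\psi_n$ on the factors $R$ and $S$, which is where the hypothesis that $R, S$ are $\lambda$-rings (rather than merely pre-$\lambda$-rings) enters essentially, and which by Theorem \ref{universal} amounts to the universal identity $\psi_n(ab) = \psi_n(a)\psi_n(b)$ already valid in $\Lambda$.
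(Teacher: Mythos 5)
Your approach is essentially the paper's: both proofs reduce the $\lambda$-ring axiom for the tensor product to the statement that its Adams operations are ring endomorphisms satisfying $\psi_m\psi_n=\psi_{mn}$, and both verify those identities on elementary tensors from $\psi_n(a\o b)=\psi_n(a)\o\psi_n(b)$. You additionally unwind why that Adams criterion implies the associativity axiom \eqref{lambda-ring}, which the paper merely cites. There is, however, one genuine gap. The criterion you are using is only valid for \emph{torsion-free} pre-$\lambda$-rings --- the paper states this hypothesis explicitly --- and your phrase ``valid after tensoring with $\Q$'' is exactly where it is needed: the identity $\sigma_t(z)=\exp\bigl(\sum_{n\ge1} t^n\psi_n(z)/n\bigr)$ recovers the $\sigma$-operations from the Adams operations only when the ring injects into its rationalization. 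A general tensor product $R\o S$ of $\lambda$-rings need not be torsion-free ($\lambda$-rings with torsion occur already in topological $K$-theory), so as written your second paragraph does not apply to $R\o S$ directly, and the argument does not close.

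The repair is the first sentence of the paper's proof: by Theorem \ref{universal} one need only verify the axiom for $R=S=\Lambda$, i.e.\ for $\Lambda\o\Lambda$, which \emph{is} torsion-free. Since the $\sigma$-operations on $R\o S$ are given by universal polynomials in the $\sigma_\lambda(a)$ and $\sigma_\lambda(b)$, any instance of \eqref{lambda-ring} in $R\o S$ is pulled back from the corresponding identity in a tensor power of $\Lambda$ along the pre-$\lambda$-ring maps classifying the finitely many elements involved, so the universal torsion-free case suffices. With that reduction in place, your verification of multiplicativity and of $\psi_m\psi_n=\psi_{mn}$, and your derivation of the axiom from them, go through verbatim. (A smaller remark of the same flavour: multiplicativity of $\psi_n$ on the factors $R$ and $S$ is an identity in two variables, so invoking the universality of $\Lambda$ on a single generator for it is slightly too quick; the paper derives it from the axiom by checking it in $\Lambda\o\Lambda$ on $h_1\o1$ and $1\o h_1$, which again is only available after the reduction to the universal case.)
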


\begin{proof}
We need only verify this for $R=\Lambda$. A torsion-free pre-$\lambda$-ring
whose Adams operations are ring homomorphisms which satisfy
$\psi_m(\psi_n(a))=\psi_{mn}(a)$ is a $\lambda$-ring.  It is easy to verify
these conditions for $\Lambda\o\Lambda$, since $\psi_n(a\o b) = \psi_n(a)
\o \psi_n(b)$.
\end{proof}

In the definition of a $\lambda$-ring, it is usual to adjoin the axiom
$$
\sigma_n(xy) = \sum_{|\lambda|=n} \sigma_\lambda(a) \o \sigma_\lambda(y) .
$$
However, this formula follows from our definition of a $\lambda$-ring: by
universality, it suffices to check it for $R=\Lambda\o\Lambda$, $x=h_1\o1$
and $y=1\o h_1$, for which it is evident.

\subsection{Complete $\lambda$-rings} 

A filtered $\lambda$-ring $R$ is a
$\lambda$-ring with decreasing filtration
$$
R = F^0R \supset F^1R \supset \dots ,
$$
such that
\begin{enumerate}
\item $\bigcap_k F^kR = 0$ (the filtration is discrete);
\item $F^mR\*F^nR\subset F^{m+n}R$ (the filtration is compatible with the
product);
\item $\sigma_m(F^nR)\subset F^{mn}R$ (the filtration is compatible with
the $\lambda$-ring structure).
\end{enumerate}
The completion of a filtered $\lambda$-ring is again a $\lambda$-ring;
define a complete $\lambda$-ring to be a $\lambda$-ring equal to its
completion. For example, the universal $\lambda$-ring $\Lambda$ is filtered
by the subspaces $F^n\Lambda$ of polynomials vanishing to order $n-1$, and
its completion is the $\lambda$-ring of symmetric power series, whose
underlying ring is the power series ring $\Z\[h_1,h_2,h_3,\dots\]$.

The tensor product of two filtered $\lambda$-rings is again a filtered
$\lambda$-ring, when furnished with the filtration
$$
F^n(R\o S) = \sum_{k=0}^n F^{n-k}R \o F^kS .
$$
If $R$ and $S$ are filtered $\lambda$-rings, denote by $R\ohat S$ the
completion of $R\o S$.

If $R$ is a complete $\lambda$-ring, the operation
$$
\Exp(a) = \sum_{n=0}^\infty \sigma_n(a) : F_1R \to 1+F_1R
$$
is an analogue of exponentiation, in the sense that $\Exp(f+g) =
\Exp(f)\Exp(g)$. Its logarithm is given by a formula of Cadogan
\cite{Cadogan}.

\begin{proposition}
\label{Cadogan}
On a complete filtered $\lambda$-ring $R$, the operation
$\Exp:F_1R\to1+F_1R$ has inverse
$$
\Log(1+a) = \sum_{n=1}^\infty \frac{\mu(n)}{n} \log(1+\psi_n(a)) .
$$
\end{proposition}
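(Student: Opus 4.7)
The plan is to show that the proposed formula is a left inverse of $\Exp:F_1R \to 1+F_1R$. The crucial ingredients are that in any $\lambda$-ring the Adams operations $\psi_n$ are ring homomorphisms satisfying $\psi_m\psi_n=\psi_{mn}$, together with Newton's identity $H(t)=\exp\bigl(\sum_{n\ge 1}t^np_n/n\bigr)$, which translates into the pre-$\lambda$-ring language as
$$\Exp(a)=\exp\Bigl(\sum_{n=1}^\infty \frac{\psi_n(a)}{n}\Bigr)$$
for any $a\in F_1R$, the exponential on the right converging in the completion because $\psi_n(a)\in F^nR$.

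First I would apply $\psi_n$ to $\Exp(a)$. Since $\psi_n$ is a ring homomorphism that commutes with the exponential (its argument lies in the maximal ideal), and $\psi_n\psi_m=\psi_{nm}$, this yields
$$\psi_n(\Exp(a))=\exp\Bigl(\sum_{m=1}^\infty \frac{\psi_{nm}(a)}{m}\Bigr).$$
Writing $b=\Exp(a)-1\in F_1R$ and using $\psi_n(1+b)=1+\psi_n(b)$, taking logarithms gives
$$\log\bigl(1+\psi_n(b)\bigr)=\sum_{m=1}^\infty \frac{\psi_{nm}(a)}{m}.$$

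Next I would substitute into the proposed formula for $\Log$ and interchange the order of summation. Grouping by $k=nm$,
$$\Log(\Exp(a))=\sum_{n=1}^\infty\frac{\mu(n)}{n}\sum_{m=1}^\infty\frac{\psi_{nm}(a)}{m}=\sum_{k=1}^\infty \frac{\psi_k(a)}{k}\sum_{n\mid k}\mu(n),$$
and the standard Möbius identity $\sum_{n\mid k}\mu(n)=[k=1]$ collapses the right-hand side to $\psi_1(a)=a$. That $\Exp(\Log(1+b))=1+b$ then follows formally from this identity together with the fact that $\Exp$ is filtration-preserving and surjective onto $1+F_1R$ by its very definition.

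The only delicate point is convergence in the filtered topology. The axiom $\sigma_m(F^nR)\subset F^{mn}R$, combined with Newton's formula expressing $\psi_n$ as a polynomial in the $\sigma_k$, ensures $\psi_n(F_1R)\subset F^nR$, so that each inner sum $\sum_m \psi_{nm}(a)/m$ lies in $F^nR$ and the entire double sum is summable in the completion; rearrangement by $k=nm$ is then legitimate. This bookkeeping is the main place where one must be careful, while the rest of the argument is a formal manipulation of identities in the $\psi_k(a)$.
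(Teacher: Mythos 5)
Your argument is correct, but it takes a different route from the paper's. You verify the composition $\Log\circ\Exp=\mathrm{id}$ by a direct computation inside $R$, using only that the Adams operations are ring endomorphisms with $\psi_m\psi_n=\psi_{mn}$ together with $\Exp(a)=\exp\bigl(\sum_n\psi_n(a)/n\bigr)$ and the M\"obius identity $\sum_{d\mid k}\mu(d)=[k=1]$. The paper instead invokes the universality of $\Lambda$ (Theorem \ref{universal}) to reduce everything to the single case $R=\Lambda$, $a=h_1$, and there verifies the opposite composition, $\Exp(\Log(1+h_1))=1+h_1$, by the same M\"obius computation applied to $\log\Exp$; it also identifies $(-1)^{n-1}\Log_n$ with the operation of the $\SS_n$-module $\Ind^{\SS_n}_{C_n}\chi_n\otimes\eps_n$. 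What your approach buys is directness: no appeal to universality, and the filtration estimate $\psi_n(F^1R)\subset F^nR$ immediately justifies convergence and rearrangement of the double sum (for fixed $N$ only finitely many pairs $(n,m)$ have $nm\le N$). What the paper's route buys is twofold: the $\Ind^{\SS_n}_{C_n}$ identification shows each homogeneous piece of $\Log$ is an \emph{integral} operation, so the statement makes sense and holds without inverting integers in $R$ (your manipulation, with its divisions by $n$ and $m$, is literally valid only in $R\otimes\Q$, i.e., for torsion-free $R$); and checking the identity once in $\Lambda$ propagates it to all $\lambda$-rings at once. Two small points you should tighten: your final step asserts that $\Exp:F^1R\to 1+F^1R$ is surjective ``by its very definition,'' but this really requires the standard successive-approximation argument in a complete filtered ring (since $\Exp(a)=1+a$ modulo $F^2R$, one solves $\Exp(a)=1+b$ iteratively); once granted, your left-inverse identity does upgrade to a two-sided inverse. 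The paper is equally silent about the symmetric point (injectivity of $\Exp$) needed to upgrade its right-inverse identity, so neither proof is more complete here, but you should not present it as tautological.
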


\begin{proof}
Expanding $\Log(1+a)$, we obtain
$$
\Log(1+a) = - \sum_{n=1}^\infty \frac{1}{n} \sum_{d|n} \mu(d)
\psi_d(-a)^{n/d} = \sum_{n=1}^\infty \Log_n(a) .
$$
Let $\chi_n$ be the character of the cyclic group $C_n$ equalling $e^{2\pi
i/n}$ on the generator of $C_n$. The characteristic of the $\SS_n$-module
$\Ind^{\SS_n}_{C_n}\chi_n$ equals
$$
\frac{1}{n} \sum_{k=0}^{n-1} e^{2\pi ik/n} p_{(k,n)}^{n/(k,n)}
= \frac{1}{n} \sum_{d|n} \mu(d) p_d^{n/d} ,
$$
while the characteristic of the $\SS_n$-module
$\Ind^{\SS_n}_{C_n}\chi_n\o\eps_n$, where $\eps_n$ is the sign
representation of $\SS_n$, equals
$$
\frac{1}{n} \sum_{d|n} \mu(d) \bigl( (-1)^{d-1}p_d \bigr)^{n/d}
= \frac{(-1)^n}{n} \sum_{d|n} \mu(d) (-p_d)^{n/d} .
$$
It follows that $(-1)^{n-1}\Log_n$ is the operation associated to the
$\SS_n$-module $\Ind^{\SS_n}_{C_n}\chi_n\o\eps_n$, and hence defines a map
from $F_1R$ to $F_nR$.

To prove that $\Log$ is the inverse of $\Exp$, it suffices to check this
for $R=\Lambda$ and $x=h_1$. We must prove that
$$
\Exp\left( \sum_{n=1}^\infty \frac{\mu(n)}{n} \log(1+p_n) \right) = 1+h_1 .
$$
The logarithm of the expression on the left-hand side equals
$$
\exp \Bigl( \sum_{k=1}^\infty \frac{p_k}{k} \Bigr) \circ
\Bigl( \sum_{n=1}^\infty \frac{\mu(n)}{n} \log(1+p_n) \Bigr)
= \sum_{n=1}^\infty \sum_{d|n} \mu(d) \frac{\log(1+p_n)}{n}
= \log(1+p_1) ,
$$
and the formula follows.
\end{proof}

\bibliographystyle{hamsalpha}
\bibliography{biblio}

\providecommand{\bysame}{\leavevmode\hbox to3em{\hrulefill}\thinspace}
\providecommand{\href}[2]{#2}
\providecommand{\eprint}{\begingroup \urlstyle{rm}\Url}
\begin{thebibliography}{Wey39}

\bibitem[Cad71]{Cadogan}
Charles Cadogan, \emph{The {M}\"obius function and connected graphs}, J.
  Combinatorial Theory Ser. B \textbf{11} (1971), 193--200.

\bibitem[CF13]{CF}
Thomas Church and Benson Farb, \emph{Representation theory and homological
  stability}, Adv. Math. \textbf{245} (2013), 250--314.

\bibitem[Gar]{Ga:data}
Stavros Garoufalidis,
  \url{http://www.math.gatech.edu/~stavros/publications/mcgroup.data}.

\bibitem[GK98]{GetzlerKapranov}
Ezra Getzler and Mikhail Kapranov, \emph{Modular operads}, Compositio Math.
  \textbf{110} (1998), no.~1, 65--126.

\bibitem[GN98]{GN}
Stavros Garoufalidis and Hiroaki Nakamura, \emph{Some {$IHX$}-type relations on
  trivalent graphs and symplectic representation theory}, Math. Res. Lett.
  \textbf{5} (1998), no.~3, 391--402.

\bibitem[Hai97]{Hain}
Richard Hain, \emph{Infinitesimal presentations of the {T}orelli groups}, J.
  Amer. Math. Soc. \textbf{10} (1997), no.~3, 597--651.

\bibitem[Har85]{Ha}
John Harer, \emph{Stability of the homology of the mapping class groups of
  orientable surfaces}, Ann. of Math. (2) \textbf{121} (1985), no.~2, 215--249.

\bibitem[HS53]{HS}
Gerhard Hochschild and Jean-Pierre Serre, \emph{Cohomology of group
  extensions}, Trans. Amer. Math. Soc. \textbf{74} (1953), 110--134.

\bibitem[Joh80]{Jo1}
Dennis Johnson, \emph{An abelian quotient of the mapping class group
  {${\mathcal I}_{g}$}}, Math. Ann. \textbf{249} (1980), no.~3, 225--242.

\bibitem[Joh83]{Jo2}
\bysame, \emph{A survey of the {T}orelli group}, Low-dimensional topology
  ({S}an {F}rancisco, {C}alif., 1981), Contemp. Math., vol.~20, Amer. Math.
  Soc., Providence, RI, 1983, pp.~165--179.

\bibitem[KK16]{Goldman-Turaev}
Nariya Kawazumi and Yusuke Kuno, \emph{The {G}oldman-{T}uraev {L}ie bialgebra
  and the {J}ohnson homomorphisms}, Handbook of {T}eichm\"uller theory. {V}ol.
  {V}, IRMA Lect. Math. Theor. Phys., vol.~26, Eur. Math. Soc., Z\"urich, 2016,
  pp.~97--165.

\bibitem[KM94]{KontsevichManin}
Maxim Kontsevich and Yuri Manin, \emph{Gromov-{W}itten classes, quantum
  cohomology, and enumerative geometry}, Comm. Math. Phys. \textbf{164} (1994),
  no.~3, 525--562.

\bibitem[KM96]{KM}
Nariya Kawazumi and Shigeyuki Morita, \emph{The primary approximation to the
  cohomology of the moduli space of curves and cocycles for the stable
  characteristic classes}, Math. Res. Lett. \textbf{3} (1996), no.~5, 629--641.

\bibitem[Knu73]{Knutson}
Donald Knutson, \emph{{$\lambda $}-rings and the representation theory of the
  symmetric group}, Lecture Notes in Mathematics, Vol. 308, Springer-Verlag,
  Berlin, 1973.

\bibitem[Kon94]{Ko}
Maxim Kontsevich, \emph{Feynman diagrams and low-dimensional topology}, First
  {E}uropean {C}ongress of {M}athematics, {V}ol.\ {II} ({P}aris, 1992), Progr.
  Math., vol. 120, Birkh\"auser, Basel, 1994, pp.~97--121.

\bibitem[Loo96]{Looijenga}
Eduard Looijenga, \emph{Stable cohomology of the mapping class group with
  symplectic coefficients and of the universal {A}bel-{J}acobi map}, J.
  Algebraic Geom. \textbf{5} (1996), no.~1, 135--150.

\bibitem[Mac95]{Macdonald}
Ian Macdonald, \emph{Symmetric functions and {H}all polynomials}, second ed.,
  Oxford Mathematical Monographs, The Clarendon Press Oxford University Press,
  New York, 1995, With contributions by A. Zelevinsky, Oxford Science
  Publications.

\bibitem[Mor93]{Mo1}
Shigeyuki Morita, \emph{The extension of {J}ohnson's homomorphism from the
  {T}orelli group to the mapping class group}, Invent. Math. \textbf{111}
  (1993), no.~1, 197--224.

\bibitem[Mor96]{Mo2}
\bysame, \emph{A linear representation of the mapping class group of orientable
  surfaces and characteristic classes of surface bundles}, Topology and
  {T}eichm\"uller spaces ({K}atinkulta, 1995), World Sci. Publ., River Edge,
  NJ, 1996, pp.~159--186.

\bibitem[MW07]{MW}
Ib~Madsen and Michael Weiss, \emph{The stable moduli space of {R}iemann
  surfaces: {M}umford's conjecture}, Ann. of Math. (2) \textbf{165} (2007),
  no.~3, 843--941.

\bibitem[PP05]{PP}
Alexander Polishchuk and Leonid Positselski, \emph{Quadratic algebras},
  University Lecture Series, vol.~37, American Mathematical Society,
  Providence, RI, 2005.

\bibitem[Ste95]{Stembridge}
John Stembridge, \emph{{SF}}, 1995,
  \url{http://www.math.lsa.umich.edu/~jrs/maple.html}.

\bibitem[Wey39]{Weyl}
Hermann Weyl, \emph{The {C}lassical {G}roups. {T}heir {I}nvariants and
  {R}epresentations}, Princeton University Press, Princeton, N.J., 1939.

\end{thebibliography}
\end{document}